\DeclareMathAlphabet{\mathbbold}{U}{bbold}{m}{n}
\newtheorem{theorem}{Theorem}[section]
\newtheorem{lemma}[theorem]{Lemma}
\newtheorem{cor}[theorem]{Corollary}
 \numberwithin{equation}{section}
 \numberwithin{table}{section}
 \numberwithin{figure}{section}
\newcommand{\ceil}[1]{\lceil#1\rceil}
\newcommand{\floor}[1]{\lfloor#1\rfloor}
\newcommand\thinsp{5pt}
\newcommand\TS{\rule{0pt}{2.6ex}}
\newcommand\BS{\rule[-1.2ex]{0pt}{0pt}}
\newcommand{\gfextn}[0]{pdf}
\newcommand{\figdir}[0]{.}
\newcommand\figwidth{3.5in}
\newcommand{\leg}[2]{\left(\frac{#1}{#2}\right)}
\newcommand{\abs}[1]{\left\lvert#1\right\rvert}
\newcommand{\dbars}[1]{\left\lVert#1\right\rVert}
\newcommand{\round}[1]{\left\lfloor#1\right\rceil}
\newcommand{\unimodular}[0]{\mathfrak{U}}
\newcommand{\littlewood}[0]{\mathfrak{L}}
\title{Mahler's problem and Turyn polynomials}
\author{Michael J. Mossinghoff}
\address{Center for Communications Research, Princeton, NJ, USA}
\email{m.mossinghoff@ccr-princeton.org}
\keywords{Mahler's problem, Turyn polynomial, Mahler measure, $L_p$ norm, Fekete polynomial, Littlewood polynomial, random point process}
\subjclass[2010]{Primary: 11R06, 30C10, 60G50; Secondary: 11C08, 11L40, 11Y35, 42A05}
\date{\today}
\begin{document}

\begin{abstract}
Mahler's problem asks for the largest possible value of the Mahler measure, normalized by the $L_2$ norm, of a polynomial with $\pm1$ coefficients and large degree.
We establish a new record value in this problem exceeding $0.951$ by analyzing certain Turyn polynomials, which are defined by cyclically shifting the coefficients of a Fekete polynomial by a prescribed amount.
It was recently established that the distribution of values over the unit circle of Fekete polynomials of large degree is effectively modeled by a particular random point process.
We extend this analysis to the Turyn polynomials, and determine expressions for the asymptotic normalized Mahler measure of these polynomials, as well as for their normalized $L_q$ norms.
We also describe a number of calculations on the corresponding random processes, which indicate that the Turyn polynomials where the shift is approximately $1/4$ of the length have Mahler measure exceeding $95\%$ of their $L_2$ norm.
Further, we show that these asymptotic values are not disturbed by a small change to make polynomials having entirely $\pm1$ coefficients, which establishes the result on Mahler's problem.
We also estimate that the limiting value of the normalized $L_1$ norm of these polynomials exceeds $0.977$, in connection with a question of Newman.
\end{abstract}

\maketitle

\section{Introduction}\label{secIntroduction}

For a polynomial $f(x)=\sum_{k=0}^n a_k x^k = a_n \prod_{k=1}^n (x-\beta_k)\in\mathbb{C}[x]$ and a positive real number $q$, define $\dbars{f}_q$ by
\[
\dbars{f}_q = \left(\int_0^1 \abs{f(e(u))}^q du\right)^{1/q},
\]
where $e(u) := e^{2\pi i u}$.
We refer to this as the \textit{$L_q$ norm} of $f$ over the unit circle, although of course this is a norm only in the range $q\geq1$.
The limiting value as $q\to\infty$ is the supremum norm of $f$ over the unit circle,
\[
\dbars{f}_\infty=\sup_{\abs{x}=1}\abs{f(x)},
\]
and the limiting value of $\dbars{f}_q$ as $q\to0^+$ is known as the \textit{Mahler measure} of $f$, denoted $M(f)$.
The Mahler measure may be defined more simply as the geometric mean of $\abs{f(x)}$ over the unit circle,
\[
\log M(f) = \int_0^1 \log\abs{f(e(u))} \, du,
\]
and it follows easily from Jensen's formula that
\[
M(f) =  \abs{a_n} \prod_{k=1}^n \max\{1, \abs{\beta_k}\}.
\]

In 1963, Mahler \cite{Mahler63} studied the problem of determining polynomials with especially large measure, relative to their coefficient sizes.
Let $H(f)=\max_k \{\abs{a_k}\}$ denote the \textit{height} of the polynomial $f$.
Mahler proved that for each degree $n$ the supremum of $M(f)/H(f)$ over $f\in\mathbb{C}[x]$ with fixed degree is attained by a unimodular polynomial, that is, a polynomial where each coefficient $a_k$ has modulus~$1$. 
Let $\unimodular_n$ denote the set of unimodular polynomials of degree $n-1$.
Since $M(f)\leq \dbars{f}_q$ for $q\geq0$, and $\dbars{f}_2=\sqrt{n}$ for $f\in\unimodular_n$ by Parseval's formula, it is convenient to normalize its Mahler measure using the $L_2$ norm and investigate large values of $M(f)/\dbars{f}_2$.

In that same paper, Mahler cited an unpublished result of Haselgrove concerning large values of this quantity:
\[
\liminf_{n} \; \sup \left\{\frac{M(f)}{\sqrt{n}} : f\in\unimodular_n\right\}
\geq e^{-\gamma/4} = 0.86562\ldots.
\]
In 1970, Fielding \cite{Fielding70} improved this by showing that
\[
\lim_{n\to\infty} \sup \left\{\frac{M(f)}{\sqrt{n}} : f\in\unimodular_n\right\} = 1,
\]
and later Beller and Newman \cite{BN73} proved that the expression inside the limit is $1-O(\frac{\log n}{\sqrt{n}})$.

The more restrictive problem for polynomials with integer coefficients is also of interest: what is the largest possible value for the normalized Mahler measure of a polynomial in $\mathbb{Z}[x]$ and positive degree?
In view of Mahler's work, this question is of particular interest for the \textit{Littlewood polynomials}, which are polynomials $f(x)=\sum_{k=0}^{n-1} a_k x^k$ where $a_k=\pm1$ for each $k$.
We use $\littlewood$ to denote the set of these polynomials, and $\littlewood_n$ to denote the set of such polynomials of degree $n-1$.
In Peter Borwein's book \cite[Problem P10]{Borwein02}, the question of whether there exists $\epsilon>0$ such that $M(f)/\dbars{f}_2\leq1-\epsilon$ for every $f\in\littlewood$ of positive degree is known as \textit{Mahler's problem}.

Mahler's problem forms a natural companion to a famous question of D.~H. Lehmer from 1933 \cite{Lehmer33} concerning polynomials with small Mahler measure.
Specifically, Lehmer asked if there exists $\epsilon>0$ such that if $f(x)\in\mathbb{Z}[x]$ and $M(f)>1$ then $M(f)\geq1+\epsilon$.
Lehmer found that
\[
M\left(x^{10}+x^9-x^7-x^6-x^5-x^4-x^3+x+1\right) = 1.17628\ldots,
\]
and this remains the smallest known value of the Mahler measure greater than $1$ for integer polynomials.
(Lower bounds on the Mahler measure of Littlewood polynomials are studied in \citelist{\cite{BDM07}\cite{DM05}\cite{GIMPW10}}.)

The largest known value of the normalized Mahler measure of a Littlewood polynomial with positive degree occurs for
\begin{equation*}
f_B(x) = x^{12}+x^{11}+x^{10}+x^9+x^8-x^7-x^6+x^5+x^4-x^3+x^2-x+1,
\end{equation*}
where
\[
\frac{M(f_B)}{\dbars{f_B}_2} = 0.98636\ldots.
\]
Other large values can be found in \cite[Table~2]{BM08}, which lists the maximal normalized Mahler measure of a Littlewood polynomial for each degree $d\leq 25$.

Some results are known regarding Mahler's question for Littlewood polynomials with large degree.
Choi and Erdelyi \cite{CE15} proved that for each $n$ there exists $f_n\in\littlewood_n$ satisfying $M(f_n)/\sqrt{n} > \frac{1}{2} + o(1)$.
In \cite{CE14} they established a stronger result, determining the limiting mean value for the normalized Mahler measure for Littlewood polynomials of fixed degree:
\begin{equation}\label{eqnMeanMMLW}
\lim_{n\to\infty} \frac{1}{2^n} \sum_{f\in\littlewood_n} \; \frac{M(f)}{\sqrt{n}} = e^{-\gamma/2} = 0.74930\ldots,
\end{equation}
where $\gamma=0.57721\ldots$ denotes the Euler--Mascheroni constant.
(The same result in fact holds for the broader case of the unimodular polynomials \cite{CM11}.)
In 2020, Erd\'elyi \cite{Erdelyi20} showed that the Rudin--Shapiro polynomials $P_k(x)$ and $Q_k(x)$ achieve
\begin{equation*}
\lim_{k\to\infty} \frac{M(P_k)}{\dbars{P_k}_2} = \frac{M(Q_k)}{\dbars{Q_k}_2} = \sqrt{\frac{2}{e}} = 0.85776\ldots.
\end{equation*}
This established a record value in Mahler's problem for polynomials with large degree.
In this article, we show that a particular family of polynomials related to the Fekete polynomials achieves a significantly better value.

Given a prime $p$, the \textit{Fekete polynomial} for $p$ is defined using the Legendre symbol mod $p$,
\[
F_p(x) = \sum_{j=0}^{p-1} \leg{j}{p} x^j.
\]
These polynomials arise in a natural way in the study of $L$-functions associated with Dirichlet characters.
Their Mahler measure and $L_q$ norms have been the subject of significant interest and research.
Certainly $M(F_p) \leq \dbars{F_p}_2 = \sqrt{p-1}$ (and likewise $\dbars{F_p}_q \leq \sqrt{p-1}$ for $0<q\leq 2$).
Moreover, a result of Littlewood from 1961 \cite{Littlewood61} implies that there is an absolute positive constant $c_0$ so that
\begin{equation}\label{eqnLwBound}
\frac{M(F_p)}{\sqrt{p-1}} \leq 1 - c_0
\end{equation}
for all odd primes $p$.
This follows from the fact that $F_p(x)$ has coefficients $\pm1$ and that $x^p F_p(1/x) = a(p)^2 F_p(x)$, where
\begin{equation}\label{eqnap}
a(p) = \begin{cases}
1, & \textrm{if $p\equiv1\bmod 4$},\\
i, & \textrm{if $p\equiv3\bmod 4$},
\end{cases}
\end{equation}
so that $e(-pu) a(p) F_p(e(2u))$ is a real trigonometric polynomial with coefficients in $\{-2,2\}$.
(A result analogous to \eqref{eqnLwBound} holds as well for $\dbars{F_p}_q$ when $0<q<2$, with a positive constant $c_q$ that depends on $q$.)
In the other direction, in 2007 Erd\'elyi and Lubinsky \cite{EL07} established a lower bound on the Mahler measure of the Fekete polynomials, proving that for any $\epsilon>0$ one has
\[
\frac{M(F_p)}{\sqrt{p}} \geq \frac{1}{2} - \epsilon
\]
when $p$ is sufficiently large.
(Here and subsequently, it is convenient to normalize using $\sqrt{p}$, rather than the $L_2$ norm $\sqrt{p-1}$, as this does not disturb the asymptotics.)
This was improved by Erd\'elyi in 2018 \cite{Erdelyi18}, who showed that if $p$ is sufficiently large, then
\[
\frac{M(F_p)}{\sqrt{p}} \geq c_1
\]
for an absolute constant $c_1>1/2$.
Recently, Klurman, Lamzouri, and Munsch \cite{KLM23} determined much more precise information on the normalized Mahler measure of the Fekete polynomials.
By employing a particular random process, they found that
\begin{equation}\label{eqnKLM}
\lim_{p\to\infty} \frac{M(F_p)}{\sqrt{p}} = 0.74083\ldots\,.
\end{equation}
Note that this is not the same as the expected value of the normalized Mahler measure of a Littlewood polynomial of large degree \eqref{eqnMeanMMLW}.

In this article, we apply the method of Klurman, Lamzouri, and Munsch to a more general family of polynomials, the \textit{Turyn polynomials}, which are defined by cyclically shifting the coefficients of a Fekete polynomial by some prescribed amount.
Given a prime $p$ and an integer $t$, the Turyn polynomial $F_{p,t}(x)$ is defined by
\[
F_{p,t}(x) = \sum_{j=0}^{p-1} \leg{j + t}{p} x^j
\]
so that $F_{p,0}(x) = F_p(x)$.
We find that the limiting normalized Mahler measure of $F_{p,t}$ increases over $0\leq t < p/4$, so the value at $t\approx p/4$ is of particular interest.
Letting $\round{x}$ denote the integer nearest $x$ (and using $\floor{x}$ as its value at a half-integer), using our main result from Section~\ref{secRandProc} and the calculations in Section~\ref{secCalculations}, we find that
\begin{equation}\label{eqnTurynMeas}
\lim_{p\to\infty} \frac{M(F_{p,\round{p/4}})}{\sqrt{p}} = 0.951\ldots\,.
\end{equation}

The Turyn polynomials are not Littlewood polynomials, owing to a single coefficient of $0$, so these fall just short of fulfilling the requirement in Mahler's problem.
In general, changing a single coefficient of an integer polynomial by $1$ can have an enormous effect on its Mahler measure: $M((x+1)^n) = M(x+1)^n = 1$, but $M((x+1)^n-1) \approx 1.4^n$, see \cite{Pritsker08}.
However, for the Turyn polynomials we show that changing this single coefficient of $0$ to $\pm1$ to form a companion Littlewood polynomial does not affect the normalized measure in the limit (see Section~\ref{secRandProc}).
The Littlewood polynomials obtained in this way when the Turyn shift amount is $t=\round{p/4}$ thus establish a new record value in Mahler's problem for the normalized measure of an infinite family of Littlewood polynomials.

The $L_q$ norms of the Fekete and Turyn polynomials have also seen substantial interest, especially the cases $q=4$ and $t = \round{p/4}$.
Determining Littlewood polynomials with small normalized $L_4$ norm is equivalent to the \textit{merit factor problem} of Golay, who stated this problem in an equivalent way involving sequences over $\{-1,1\}$ having small aperiodic autocorrelations.
In 1983, Golay \cite{Golay83} attributed to Turyn the empirical observation that very good merit factors are exhibited when shifting a Legendre sequence by approximately $1/4$ of its length.
Using a heuristic assumption, Golay computed in effect that the asymptotic value of the normalized $L_4$ norm of the Turyn polynomials is
\[
\lim_{p\to\infty} \frac{\dbars{F_{p,t}}_4}{\sqrt{p}} = \left(8\left(\frac{t}{p}-\frac{1}{4}\right)^2+\frac{7}{6}\right)^{1/4}
\]
for $0\leq t< p/2$, and of course this achieves its minimal value $(7/6)^{1/4}$ when $t/p=1/4$.
This was proved unconditionally in 1988 by H{\o}holdt and Jensen \cite{HJ88}, and in 2002 Borwein and Choi \cite{BC02} refined this by determining a precise formula for $\dbars{F_{p,t}}_4$, with particular attention to the cases $t=0$ and $t=\round{p/4}$.
For more information on the merit factor, autocorrelations of binary sequences, and their relations to problems in analysis, number theory, and engineering, we refer the reader to \citelist{\cite{Borwein02}\cite{BM08}\cite{Jedwab05}\cite{Katz18}\cite{Schmidt16}}.

In 2017, G\"unther and Schmidt \cite{GS17} greatly generalized these results by determining the limiting value of the normalized $L_{2k}$ norm of the Fekete and Turyn polynomials when $k$ is a positive integer.
They proved that there is a function $\phi_k : \mathbb{R}\to\mathbb{R}$ so that $(L_{2k}(F_{p,\round{\alpha p}})/\sqrt{p})^{2k} = \phi_k(\alpha)$, described an efficient method to calculate $\phi_k(0)$, and showed that $\phi_k(\alpha)$ attains its minimum at $\alpha=1/4$ for $k\in\{2,3,4\}$.
They also conjectured that this remains the case for every integer $k\geq5$.

The study of the merit factor and $L_{2k}$ norms of the Turyn polynomials, and the fact that particularly small normalized norms are achieved by these polynomials when $\alpha=1/4$, motivated the study of their Mahler measure here: one might expect that a polynomial with a small normalized $L_{2k}$ norm with $k\geq2$ would also exhibit a large normalized Mahler measure.
We find that this is indeed the case by extending the method of Klurman, Lamzouri, and Munsch to the Turyn polynomials, and we determine an expression for the limiting value of the normalized measure in terms of the relative shift $t/p$.

In \cite{KLM23} the authors also determined an expression for the limiting value of $\dbars{F_p}_q/\sqrt{p}$, for all $q>0$.
We generalize this result to the case of Turyn polynomials $F_{p,t}$ as well.
This extends to their companion Littlewood polynomials too, as with the Mahler measure.

The case of $L_q$ norms with $q$ an odd integer also arises in the literature, especially the case $q=1$.
In 1965, Newman \cite{Newman65} proved that there exists a constant $c_1>0$ so that for each $n\geq2$ there exists a unimodular polynomial $f\in\mathfrak{U}_n$ with $\dbars{f}_1>\sqrt{n}-c_1$, but much less is known for the Littlewood case.
In \cite{Newman60}, Newman cited a conjecture (without attribution) that there exists a positive constant $\epsilon_1$ so that $\dbars{f}_1/\dbars{f}_2 < 1-\epsilon_1$ for every $f\in\mathfrak{L}$ of positive degree.
We call this $L_1$ analogue of Mahler's question \textit{Newman's problem} (see also \cite[Section~5]{BM08}).
We obtain an estimate for an upper bound on a putative $\epsilon_1$ in this problem by using the Turyn polynomials, estimating that
\begin{equation}\label{eqnTurynL1}
\lim_{p\to\infty} \frac{\dbars{F_{p,\round{p/4}}}_1}{\sqrt{p}} = 0.9775\ldots,
\end{equation}
and that this value is not disturbed by altering the $F_{p,\round{p/4}}$ slightly to make Littlewood polynomials, just as with the Mahler measure.

The remainder of this article is organized in the following way.
Section~\ref{secProperties} establishes some analytic properties of the Turyn polynomials, including a bound on their supremum norm, which generalizes a result of Montgomery. 
After these preparations, our main results are stated in Section~\ref{secRandProc} regarding the Mahler measure and $L_q$ norms of the Turyn polynomials, which generalize results for the Fekete polynomials.
We establish an expression for the limiting value of $M(F_{p,\round{\alpha p}})/\sqrt{p}$ as $p\to\infty$ for any fixed value of $\alpha$ in $[0,1]$, and similarly for $\dbars{F_{p,\round{\alpha p}}}_q/\sqrt{p}$ for any $q>0$: see Theorem~\ref{thmTuryn}.
This statement is proved in Section~\ref{secProofThm} using the approach of \cite{KLM23}.
Further, we show that these asymptotic values for the Turyn polynomials are undisturbed by a small alteration to create companion Littlewood polynomials, in connection the problems of Mahler and Newman: see Corollary~\ref{corTuryn} in Section~\ref{secRandProc}.
This statement is proved in Section~\ref{secProofCor}.
Section~\ref{secCalculations} describes a number of computations for estimating these limiting values.
In particular, we obtain the estimate \eqref{eqnTurynMeas} for the Turyn polynomials when $t=\round{p/4}$, and for the Fekete polynomials we find the limiting value is approximately $0.740$, which is close to the value \eqref{eqnKLM} from \cite{KLM23}.
The same section also summarizes some calculations with the $L_1$ and $L_3$ norms, and obtains \eqref{eqnTurynL1}.
Finally, Section~\ref{secFuture} briefly indicates some potential directions for future work regarding a promising family of generalized Turyn polynomials.

\section{Properties of Turyn polynomials}\label{secProperties}

We establish some analytic properties of Turyn polynomials.
Let $\zeta_p$ denote the $p$th root of unity $e(1/p)$.
Recall that Gauss established that if $p$ is an odd prime then $F_p(\zeta_p) = a(p) \sqrt{p}$, with $a(p)$ as in \eqref{eqnap}, and it is easily shown from this that $F_p(\zeta_p^k) = \leg{k}{p} F_p(\zeta_p)$.
We first record a generalization of these facts for Turyn polynomials.

\begin{lemma}\label{lemmaGaussSum}
Suppose $p$ is an odd prime and $t$ is an integer.
Then
\begin{equation}\label{eqnTurynz}
F_{p,t}(\zeta_p) = a(p) \zeta_p^{-t} \sqrt{p}.
\end{equation}
In addition, for each integer $k\geq0$ we have
\begin{equation}\label{eqnTurynzk}
F_{p,t}(\zeta_p^k) = \leg{k}{p}\zeta_p^{-(k-1)t} F_{p,t}(\zeta_p).
\end{equation}
\end{lemma}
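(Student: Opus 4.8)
The plan is to reduce both identities to the two facts already recorded for the Fekete polynomial, namely Gauss's evaluation $F_p(\zeta_p)=a(p)\sqrt{p}$ and the multiplicativity relation $F_p(\zeta_p^k)=\leg{k}{p}F_p(\zeta_p)$, by means of a single change of summation index. The key observation is that the summand $\leg{j+t}{p}\,\zeta_p^{kj}$ is periodic in $j$ with period $p$: the Legendre symbol $\leg{\,\cdot\,}{p}$ depends only on its argument modulo $p$, and $\zeta_p^{k(j+p)}=\zeta_p^{kj}$ since $\zeta_p^p=1$. Hence one may freely reindex the sum over any complete residue system modulo $p$.

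For \eqref{eqnTurynz}, I first write $F_{p,t}(\zeta_p)=\sum_{j=0}^{p-1}\leg{j+t}{p}\zeta_p^{j}$ and substitute $m=j+t$. As $j$ runs over a complete residue system modulo $p$, so does $m$, and $\zeta_p^{j}=\zeta_p^{-t}\zeta_p^{m}$, so the sum factors as $\zeta_p^{-t}\sum_{m=0}^{p-1}\leg{m}{p}\zeta_p^{m}=\zeta_p^{-t}F_p(\zeta_p)$. Inserting Gauss's value yields \eqref{eqnTurynz}.

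For \eqref{eqnTurynzk}, the same substitution applied to $F_{p,t}(\zeta_p^k)=\sum_{j=0}^{p-1}\leg{j+t}{p}\zeta_p^{kj}$ gives $\zeta_p^{-kt}\sum_{m}\leg{m}{p}\zeta_p^{km}=\zeta_p^{-kt}F_p(\zeta_p^k)$. I then apply the multiplicativity relation to replace $F_p(\zeta_p^k)$ by $\leg{k}{p}F_p(\zeta_p)$, and use the already established \eqref{eqnTurynz} in the form $F_p(\zeta_p)=\zeta_p^{t}F_{p,t}(\zeta_p)$ to eliminate $F_p(\zeta_p)$. Collecting the powers of $\zeta_p$ produces the factor $\zeta_p^{-kt+t}=\zeta_p^{-(k-1)t}$, which is exactly \eqref{eqnTurynzk}.

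There is no serious obstacle here; the argument is a routine manipulation of Gauss sums. The only points requiring care are the justification that the reindexing $j\mapsto j+t$ preserves the sum (which is precisely the period-$p$ observation above) and the degenerate case $p\mid k$, where $\zeta_p^k=1$ and both $F_p(\zeta_p^k)=F_p(1)=\sum_{m=0}^{p-1}\leg{m}{p}=0$ and $\leg{k}{p}=0$, so that \eqref{eqnTurynzk} reduces to the trivial identity $0=0$.
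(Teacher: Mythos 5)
Your proof is correct, and it reaches the result by a somewhat different route than the paper. You reduce everything to the Fekete polynomial via the identity $F_{p,t}(\zeta_p^k)=\zeta_p^{-kt}F_p(\zeta_p^k)$, obtained from the shift reindexing $m=j+t$, and then invoke the two Fekete facts recorded before the lemma (Gauss's evaluation and $F_p(\zeta_p^k)=\leg{k}{p}F_p(\zeta_p)$). The paper instead proves \eqref{eqnTurynzk} by a direct manipulation inside the Turyn sum: for $p\nmid k$ it writes $\leg{j+t}{p}=\leg{k}{p}\leg{jk+tk}{p}$ using multiplicativity of the Legendre symbol, then reindexes by $j\mapsto jk+t(k-1)$, which runs over a complete residue system mod $p$, so the sum collapses to $F_{p,t}(\zeta_p)$ without ever passing through $F_p$. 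The ingredients (periodicity mod $p$, multiplicativity, reindexing over complete residue systems) are identical; the difference is organizational. Your version is more modular and makes transparent that the Turyn identities are twisted restatements of the Fekete ones, at the cost of taking $F_p(\zeta_p^k)=\leg{k}{p}F_p(\zeta_p)$ as a black box (legitimate here, since the paper records it as known just before the lemma). The paper's version is self-contained: specializing its computation to $t=0$ actually reproves that Fekete relation rather than assuming it. Both treatments dispose of the case $p\mid k$ the same way, since $F_{p,t}(1)=0$ and $\leg{k}{p}=0$ make both sides of \eqref{eqnTurynzk} vanish.
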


\begin{proof}
The identity \eqref{eqnTurynz} follows easily from Gauss' formula.
For \eqref{eqnTurynzk}, this is trivial when $p\mid k$, since $F_{p,t}(1)=0$.
If $p\nmid k$, we calculate
\begin{align*}
F_{p,t}(\zeta_p^k)
   &= \sum_{j=0}^{p-1}\leg{j+t}{p} \zeta_p^{jk}\\
   &= \leg{k}{p} \sum_{j=0}^{p-1} \leg{jk+tk}{p} \zeta_p^{jk}\\
   &= \leg{k}{p} \zeta_p^{-(k-1)t} \sum_{j=0}^{p-1} \leg{jk+t(k-1)+t}{p}\zeta_p^{jk+t(k-1)}\\
   &= \leg{k}{p}\zeta_p^{-(k-1)t} F_{p,t}(\zeta_p).
\qedhere
\end{align*}
\end{proof}

Next, from Gauss' formula one observes that $\abs{F_p(\zeta_p^k)} = \sqrt{p}$ for $1\leq k < p$.
With so many values of the same modulus exhibited by $F_p(e(u))$ at equally spaced points $e(u)$ around the unit circle, it is natural to ask if $\dbars{F_p}_\infty = O(\sqrt{p})$, but in 1980 Montgomery \cite{Montgomery80} showed that this is not the case, proving that if $p$ is sufficiently large then
\[
\frac{2}{\pi} \sqrt{p} \log\log p < \dbars{F_p}_\infty < C \sqrt{p} \log p
\]
for an absolute constant $C$.
From Lemma~\ref{lemmaGaussSum}, we see that the Turyn polynomials also have the property that $\abs{F_{p,t}(\zeta_p^k)} = \sqrt{p}$ for $1\leq k<p$.
We require an upper bound for the supremum norm of the Turyn polynomials over the unit circle and establish it 
here by adapting Montgomery's argument.

\begin{theorem}\label{thmMontgomery}
The Turyn polynomials satisfy $\dbars{F_{p,t}}_\infty = O\left(\sqrt{p} \log p\right)$, where the implied constant is independent of $t$.
\end{theorem}

\begin{proof}
From \eqref{eqnTurynzk}, we have
\[
\leg{k}{p} = \zeta_p^{(k-1)t}\frac{F_{p,t}(\zeta_p^k)}{F_{p,t}(\zeta_p)},
\]
so
\begin{align*}
F_{p,t}(e(u))
&= e(-tu) \sum_{j=t}^{p+t-1} \leg{j}{p} e(ju)\\
&= \frac{e(-tu)}{F_{p,t}(\zeta_p)} \sum_{j=t}^{p+t-1} \zeta_p^{(j-1)t} F_{p,t}(\zeta_p^j) e(ju)\\
&= \frac{e(-tu)}{F_{p,t}(\zeta_p)} \sum_{j=t}^{p+t-1} \zeta_p^{(j-1)t} e(ju) \sum_{k=t}^{p+t-1} \leg{k}{p} \zeta_p^{j(k-t)} \\
&= \frac{e(-tu)}{\zeta_p^t F_{p,t}(\zeta_p)} \sum_{k=t}^{p+t-1} \leg{k}{p} \sum_{j=t}^{p+t-1}  e\left(j(u+k/p)\right)\\
&= \frac{1-e(pu)}{\zeta_p^t F_{p,t}(\zeta_p)}  \sum_{k=t}^{p+t-1} \leg{k}{p} \frac{\zeta_p^{kt}}{1-e(u+k/p)}\\
&= \frac{1-e(pu)}{2i\zeta_p^t F_{p,t}(\zeta_p)}\sum_{k=t}^{p+t-1} \leg{k}{p}\zeta_p^{kt}\left(i-\cot(\pi(u+k/p)) \right)\\
&= \frac{1-e(pu)}{2}\left(\leg{t}{p} - \frac{1}{i a(p)\sqrt{p}}\sum_{k=t}^{p+t-1}\leg{k}{p}\zeta_p^{kt}\cot(\pi(u+k/p))\right),
\end{align*}
where the last line follows from Lemma~\ref{lemmaGaussSum}.
Let $\dbars{x}$ denote the distance from $x$ to the integer nearest $x$, so $\dbars{x}=\min\{x-\floor{x},\ceil{x}-x\}$.
Then
\begin{align*}
\abs{F_{p,t}(e(u))}
&\leq \abs{\sin(\pi \dbars{p u})} \left(1 + \frac{1}{\sqrt{p}}\sum_{k=t}^{p+t-1} \abs{\cot(\pi(u+k/p))}\right)\\
&\leq \abs{\sin(\pi \dbars{p u})} \left(1 + \frac{1}{\pi\sqrt{p}} \sum_{k=t}^{p+t-1} \dbars{u + k/p}^{-1}\right)\\
&\ll \abs{\sin(\pi \dbars{p u})}\left(1 + \sqrt{p}\dbars{pu}^{-1} + \sqrt{p}\log p\right).
\end{align*}
The statement follows.
\end{proof}

Last, we require a formula for the value of a Turyn polynomial near a $p$th root of unity.
In \cite{KLM23}, for integer $k$ and real number $x\in[0,1]$, the authors defined the auxiliary function
\[
G_p(k,x) = \frac{F_p(\zeta_p^{k+x})}{F_p(\zeta_p)},
\]
where we use $\zeta_p^{k+x}$ to denote $e((k+x)/p)$.
They employed an interpolation from \cite{CGPS00} to determine an expression for this function.
We require an analogue of that interpolation for the Turyn polynomials.
For an integer $k$ and real number $x\in[0,1]$, let
\begin{equation}\label{eqnGpt}
G_{p,t}(k,x) := \zeta_p^{(k-1)t} \frac{F_{p,t}(\zeta_p^{k+x})}{F_{p,t}(\zeta_p)}.
\end{equation}

\begin{theorem}\label{thmAbsTuryn}
Let $k$ be an integer and let $x\in(0,1)$.
Then
\[
G_{p,t}(k,x) =
\frac{e(x)-1}{p}\sum_{\abs{j}<p/2} \leg{k-j}{p} \frac{\zeta_p^{jt}}{\zeta_p^{j+x}-1}.
\]
\end{theorem}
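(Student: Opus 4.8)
The plan is to obtain the formula from Lagrange interpolation of $F_{p,t}$ at the $p$th roots of unity, feeding in the nodal values supplied by Lemma~\ref{lemmaGaussSum}; this is the Turyn analogue of the interpolation of \cite{CGPS00} used in \cite{KLM23} for $F_p$. Since $F_{p,t}$ has degree at most $p-1$, it is determined by its values at $\zeta_p^0,\dots,\zeta_p^{p-1}$. Differentiating $z^p-1=\prod_{m=0}^{p-1}(z-\zeta_p^m)$ and evaluating at a node gives $\prod_{m\neq j}(\zeta_p^j-\zeta_p^m)=p\zeta_p^{-j}$, so the interpolation formula collapses to
\[
F_{p,t}(z)=\frac{z^p-1}{p}\sum_{j=0}^{p-1}\frac{\zeta_p^{j}\,F_{p,t}(\zeta_p^{j})}{z-\zeta_p^{j}}.
\]

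Next I would insert $F_{p,t}(\zeta_p^{j})=\leg{j}{p}\zeta_p^{-(j-1)t}F_{p,t}(\zeta_p)$ from \eqref{eqnTurynzk} and set $z=\zeta_p^{k+x}=e((k+x)/p)$. Since $k\in\mathbb{Z}$ we have $z^p=e(x)$, so $z^p-1=e(x)-1$; and since $x\in(0,1)$ the point $z$ equals no node $\zeta_p^{j}$, so all denominators are nonzero and the interpolation is valid. Dividing by $F_{p,t}(\zeta_p)$, which is nonzero by \eqref{eqnTurynz}, and multiplying by $\zeta_p^{(k-1)t}$ to form $G_{p,t}(k,x)$ as in \eqref{eqnGpt}, the prefactors $\zeta_p^{(k-1)t}$ and $\zeta_p^{-(j-1)t}$ combine to $\zeta_p^{(k-j)t}$, giving
\[
G_{p,t}(k,x)=\frac{e(x)-1}{p}\sum_{j=0}^{p-1}\leg{j}{p}\frac{\zeta_p^{j}\,\zeta_p^{(k-j)t}}{\zeta_p^{k+x}-\zeta_p^{j}}.
\]

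Finally I would reindex via $j\mapsto k-j$. Because the summand depends on $j$ only through quantities periodic modulo $p$, the sum may be taken over any complete residue system, and $\{\,j:\abs{j}<p/2\,\}$ is such a system since $p$ is odd. Under the substitution $\leg{j}{p}$ becomes $\leg{k-j}{p}$, the exponent $(k-j)t$ becomes $jt$, and the geometric factor transforms as
\[
\frac{\zeta_p^{j}}{\zeta_p^{k+x}-\zeta_p^{j}}\longmapsto\frac{\zeta_p^{k-j}}{\zeta_p^{k+x}-\zeta_p^{k-j}}=\frac{1}{\zeta_p^{j+x}-1},
\]
the last equality following on cancelling $\zeta_p^{k}$ and multiplying numerator and denominator by $\zeta_p^{j}$. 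Collecting these pieces reproduces the stated sum. The interpolation step is routine, so I expect the only delicate point to be this final bookkeeping of the character and the exponents together with the verification that the symmetric window $\abs{j}<p/2$ is a legitimate complete residue system; no substantive obstacle is anticipated beyond this algebraic reduction.
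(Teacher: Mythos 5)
Your proposal is correct and follows essentially the same route as the paper: Lagrange interpolation at the $p$th roots of unity with the node products $\frac{z^p-1}{z-\zeta_p^j}$ and $p\zeta_p^{-j}$, insertion of the nodal values from Lemma~\ref{lemmaGaussSum}, evaluation at $z=\zeta_p^{k+x}$, and the reindexing $j\mapsto k-j$ over the symmetric complete residue system $\abs{j}<p/2$. The only cosmetic difference is that the paper truncates the index range to $\abs{k-\ell}<p/2$ before substituting while you substitute first and then invoke periodicity, which is the same argument in a different order.
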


\begin{proof}
Using Lagrange interpolation, we have
\[
F_{p,t}(z) = \sum_{\ell=0}^{p-1} F_{p,t}(\zeta_p^\ell)
\prod_{\substack{j=0\\j\neq\ell}}^{p-1} \frac{z-\zeta_p^j}{\zeta_p^\ell-\zeta_p^j}.
\]
Since
\[
\prod_{\substack{j=0\\j\neq\ell}}^{p-1} (z-\zeta_p^j) = \frac{z^p-1}{z-\zeta_p^\ell}
\;\;\; \textrm{and} \;\;\;
\prod_{\substack{j=0\\j\neq\ell}}^{p-1} (\zeta_p^\ell-\zeta_p^j) =
\lim_{z\to\zeta_p^\ell} \frac{z^p-1}{z-\zeta_p^\ell} = \frac{p}{\zeta_p^{\ell}},
\]
using \eqref{eqnTurynzk} we find that
\[
\frac{F_{p,t}(z)}{F_{p,t}(\zeta_p)}
= \sum_{\ell=0}^{p-1} \leg{\ell}{p}\zeta_p^{-(\ell-1)t} \frac{\zeta_p^\ell(z^p-1)}{p(z-\zeta_p^\ell)}
= \frac{\zeta_p^t}{p}(z^p-1) \sum_{\ell=0}^{p-1} \leg{\ell}{p} \frac{\zeta_p^{\ell(1-t)}}{z-\zeta_p^\ell}.
\]
Now fix an integer $k$ and write $z=\zeta_p^k e(x/p)=\zeta_p^{k+x}$ with $x\in(0,1)$.
Then
\begin{align*}
\frac{F_{p,t}(\zeta_p^{k+x})}{F_{p,t}(\zeta_p)} &=
\frac{\zeta_p^t (e(x)-1)}{p}
\sum_{\ell=0}^{p-1} \leg{\ell}{p} \frac{\zeta_p^{-\ell t}}{\zeta_p^{k-\ell+x}-1}\\
&=
\frac{\zeta_p^t (e(x)-1)}{p} \sum_{\abs{k-\ell}<p/2} \leg{\ell}{p} \frac{\zeta_p^{-\ell t}}{\zeta_p^{k-\ell+x}-1},
\end{align*}
and substituting $j=k-\ell$ we obtain the result.
\end{proof}

The problem of computing the Mahler measure of Turyn polynomials thus reduces to a computation regarding the $G_{p,t}$ functions: using \eqref{eqnTurynz} and \eqref{eqnGpt}, we see that
\begin{equation}\label{eqnTurynToInterp}
\begin{split}
\log M(F_{p,t}) &= \int_0^1 \log\abs{F_{p,t}(e(u))}\,du\\
&= \frac{1}{2}\log p + \frac{1}{p} \sum_{k=0}^{p-1} \int_0^1 \log\abs{G_{p,t}(k, x)}\,dx.
\end{split}
\end{equation}
A similar reduction applies for computing $\dbars{F_{p,t}}_q$.
An insight of \cite{KLM23} is that the interpolating formula from \cite{CGPS00} for $G_p(k,x)$ in these calculations can be effectively modeled by using a particular random process.
This procedure generalizes to the Turyn case, as we summarize in the next section.

\section{Mahler measure and $L_q$ norms}\label{secRandProc}

We describe our main results.

\subsection{Turyn polynomials}\label{subsecTuryn}
Since
\[
\frac{1}{p(\zeta_p^{j+x}-1)} = \frac{1}{2\pi i(j+x)} + O\left(\frac{1}{p}\right)
\]
and
$\zeta_p^{\round{\alpha p}j} = e(\alpha j) + O(1/p)$
for $\alpha\in[0,1]$, Theorem~\ref{thmAbsTuryn} suggests that studying the sum
\[
\sum_{j\in\mathbb{Z}} \delta_j \frac{e(\alpha j)(e(x)-1)}{2\pi i (j+x)},
\]
with each $\delta_j\in\{-1,1\}$, could provide information on $G_{p,\round{\alpha p}}(k,x)$, and thus on the Mahler measure of the Turyn polynomials via \eqref{eqnTurynToInterp}, as well as their $L_q$ norms.
To this end, for each $\alpha\in[0,1]$ we introduce the random point process
\begin{equation}\label{eqnRandProc}
G_{\mathbb{X},\alpha}(x) := \sum_{j\in\mathbb{Z}} \frac{e(\alpha j)(e(x)-1)}{2\pi i (j+x)} \mathbb{X}(j), \;\;\; x\in[0,1],
\end{equation}
where the $\{\mathbb{X}(j)\}_{j\in\mathbb{Z}}$ are independent, identically distributed random variables taking the values $\pm1$ with equal probability $1/2$.

We refer to $\alpha$ as the \textit{relative shift} of the Turyn polynomial.
In \cite{KLM23}, the authors studied the case $\alpha=0$, and established that the sequence of processes $(G_{p,0}(k,x))$ converges in law to the process $G_{\mathbb{X},0}(x)$ in the space $C[0,1]$ of continuous functions on $[0,1]$.
We find that this method may be adapted to $(G_{p,\alpha}(k,x))$ and $G_{\mathbb{X},\alpha}(x)$ for any $\alpha\in[0,1]$, and establish the following statements in Section~\ref{secProofThm}.

\begin{theorem}\label{thmTuryn}
Let $\alpha\in[0,1]$.
\begin{enumerate}[label=(\roman*)]
\item\label{partPhi}
If $\varphi : C[0,1] \to \mathbb{C}$ is bounded and continuous and $x\in[0,1]$, then
\[
\lim_{p\to\infty} \frac{1}{p} \sum_{k=0}^{p-1} \varphi\left(G_{p,\round{\alpha p}}(k, x)\right) = \mathbb{E}(\varphi(G_{\mathbb{X},\alpha}(x))).
\]
\item\label{partLq}
For any $q>0$, the $L_q$ norm of the Turyn polynomial with relative shift $\alpha$ satisfies
\[
\lim_{p\to\infty} \frac{\dbars{F_{p,\round{\alpha p}}}_q}{\sqrt{p}}
= \kappa_q(\alpha)
:= \left(\int_0^1 \mathbb{E}(\abs{G_{\mathbb{X},\alpha}(x)}^q)\, dx\right)^{1/q}.
\]
\item\label{partMeasure}
The Mahler measure of the Turyn polynomial with relative shift $\alpha$ satisfies
\[
\lim_{p\to\infty} \frac{M(F_{p,\round{\alpha p}})}{\sqrt{p}}
= \kappa_0(\alpha)
:= \exp\left(\int_0^1 \mathbb{E}(\log \abs{G_{\mathbb{X},\alpha}(x)})\, dx\right).
\]
\end{enumerate}
\end{theorem}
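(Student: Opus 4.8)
The plan is to follow the strategy of Klurman, Lamzouri, and Munsch \cite{KLM23} for the Fekete case $\alpha=0$ and adapt each step to accommodate the twist $e(\alpha j)$ appearing in \eqref{eqnRandProc}, which arises from the factor $\zeta_p^{jt}$ in Theorem~\ref{thmAbsTuryn} when $t=\round{\alpha p}$. The three parts are proved in sequence: part \ref{partPhi} carries the probabilistic core, establishing convergence in law of the empirical distribution of the paths $\{G_{p,\round{\alpha p}}(k,\cdot)\}_{k}$ to the law of $G_{\mathbb{X},\alpha}$ on $C[0,1]$, and parts \ref{partLq} and \ref{partMeasure} follow by applying \ref{partPhi} to suitable path functionals together with tail control.

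For part \ref{partPhi}, the first step is to replace $G_{p,\round{\alpha p}}(k,x)$ by the model sum $\sum_{\abs{j}<p/2}\leg{k-j}{p}\frac{e(\alpha j)(e(x)-1)}{2\pi i(j+x)}$, using Theorem~\ref{thmAbsTuryn} together with the two approximations recorded at the start of Section~\ref{secRandProc}, and verifying that the accumulated error tends to $0$. The essential point is then that, as $k$ runs over the residues modulo $p$, the vector of Legendre symbols $\bigl(\leg{k-j}{p}\bigr)_{\abs{j}<p/2}$ equidistributes like independent Rademacher variables: for any finite collection of distinct shifts $j_1,\dots,j_m$ and prescribed signs $\epsilon_1,\dots,\epsilon_m$, the proportion of $k$ realizing that pattern tends to $2^{-m}$. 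This is precisely where Weil's bound \cite[Thm.~11.23]{IK04} enters, exactly as in the lower bound of Theorem~\ref{thmMontgomery}: expanding $\prod_i\bigl(1+\epsilon_i\leg{k-j_i}{p}\bigr)$ and summing over $k$ leaves the main term $2^{-m}p$ with error $O(2^m m^{1/2}\sqrt{p})$. This yields convergence of the finite-dimensional distributions, and promoting it to convergence in law on $C[0,1]$ requires tightness, which I would obtain from an equicontinuity estimate (Lemma~\ref{lemEquicont}) built on the supremum-norm bound of Theorem~\ref{thmMontgomery}.

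Parts \ref{partLq} and \ref{partMeasure} follow from \ref{partPhi} by applying it to functionals of the form $g\mapsto\int_0^1\psi(g(x))\,dx$, where $\psi(z)=\abs{z}^q$ for \ref{partLq} and $\psi(z)=\log\abs{z}$ for \ref{partMeasure}, after truncating $\psi$ so the functional is bounded and continuous on $C[0,1]$. For \ref{partLq} I would take $\psi_N(z)=\min(\abs{z}^q,N)$, apply \ref{partPhi}, and let $N\to\infty$; the passage to the limit is justified by uniform integrability of $\abs{G_{p,\round{\alpha p}}(k,x)}^q$ over $(k,x)$, which follows from a uniform-in-$p$ bound on a higher moment $\frac1p\sum_k\int_0^1\abs{G_{p,\round{\alpha p}}(k,x)}^{q'}\,dx$ with $q'>q$, obtainable from the Gauss-sum expression in Theorem~\ref{thmAbsTuryn}. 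For \ref{partMeasure}, the reduction \eqref{eqnTurynToInterp} shows it suffices to prove $\frac1p\sum_k\int_0^1\log\abs{G_{p,\round{\alpha p}}(k,x)}\,dx\to\int_0^1\mathbb{E}(\log\abs{G_{\mathbb{X},\alpha}(x)})\,dx$, and I would split $\log\abs{z}=\log^+\abs{z}-\log^-\abs{z}$, bounding $\log^+$ by $\abs{z}^q/q$ (handled as in \ref{partLq}) and treating $\log^-$ via the truncated functional $g\mapsto\int_0^1\max(\log\abs{g(x)},-M)\,dx$ before letting $M\to\infty$.

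The main obstacle is the lower tail $\log^-\abs{G_{p,\round{\alpha p}}(k,x)}$ in part \ref{partMeasure}: because $\log\abs{z}\to-\infty$ as $z\to0$, the convergence hinges on a quantitative anti-concentration estimate showing that the values $G_{p,\round{\alpha p}}(k,x)$ avoid a neighborhood of the origin often enough, uniformly in $p$, so that $\log^-$ is uniformly integrable over $(k,x)$; the analogous statement for the limit process, namely the finiteness of $\mathbb{E}(\log\abs{G_{\mathbb{X},\alpha}(x)})$ and its integrability in $x$, requires a matching small-ball bound $\mathbb{P}(\abs{G_{\mathbb{X},\alpha}(x)}<\epsilon)\ll\epsilon^c$ for the random sum. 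Establishing these estimates, and confirming that the twist $e(\alpha j)$ does not introduce cancellation that weakens them, is the technical heart of the argument.
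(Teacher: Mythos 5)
Your overall architecture tracks the paper's proof: part~\ref{partPhi} via Weil's bound plus tightness from Theorem~\ref{thmMontgomery} through an equicontinuity estimate (the paper's Lemma~\ref{lemEquicont}), and parts~\ref{partLq}--\ref{partMeasure} by applying \ref{partPhi} to integral functionals with separate tail control. Two small remarks on \ref{partPhi}: the paper establishes finite-dimensional convergence by computing joint moments of the \emph{full} sums $G_{p,\round{\alpha p}}(k,x_\ell)$ directly (Lemma~\ref{lemMoments}), rather than by counting sign patterns of finitely many Legendre symbols; your pattern-count route is workable but needs an extra $L^2$ truncation to finitely many $j$, uniform in $p$, before a finite-dimensional equidistribution statement can say anything about a sum of $\sim p$ terms. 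Part~\ref{partLq} as you describe it (truncate $\abs{z}^q$ at height $N$, pass to the limit by uniform integrability from a higher moment) is if anything more careful than the paper's direct application of \ref{partPhi}.

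The genuine gap is in part~\ref{partMeasure}. You correctly isolate the lower tail $\log^-$ as the crux and state that one needs an anti-concentration estimate, uniform in $p$, together with a small-ball bound for the limit process---but you do not supply either, and the probabilistic framing you suggest would not produce the finite-$p$ statement: a bound $\mathbb{P}(\abs{G_{\mathbb{X},\alpha}(x)}<\epsilon)\ll\epsilon^c$ for the limit process cannot be transferred back to $\frac1p\sum_k$ through convergence in law, since \ref{partPhi} carries no rate. The paper's resolution is not probabilistic at all; it is a \emph{deterministic} property of each interpolating function. Setting $\widetilde{H}_{p,\alpha}(k,x)=\sum_{\abs{j}<p/2}\leg{k-j}{p}\zeta_p^{j\round{\alpha p}}/(j+x)$ (the model for $2\pi i\,G_{p,\round{\alpha p}}(k,x)/(e(x)-1)$), Lemma~\ref{lemSmallValsInt} proves by explicit estimates---the $j=0$ and $j=-1$ terms dominate the tail $\sum_j (j+x)^{-2}$, with a case analysis in $\cos(2\pi t/p)$, $\sin(2\pi t/p)$ and in $k\in\{0,-1\}$ versus other $k$---that for \emph{every} $p$, $k$, and $\alpha$, at least one of $\abs{\widetilde{H}'_{p,\alpha}(k,x)}$, $\abs{\widetilde{H}''_{p,\alpha}(k,x)}$ is bounded below by an absolute constant on all of $[0,1]$. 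H\"older's inequality then converts this derivative lower bound into $\int_0^1\log\abs{\widetilde{H}_{p,\alpha}(k,x)}\,\mathbbold{1}_{\abs{\widetilde{H}_{p,\alpha}(k,x)}\leq\epsilon}\,dx=O(\epsilon^{0.24})$ for each single $k$, which is what makes the estimate uniform in $p$; the same path-by-path argument applies almost surely to the limit process using the real-analyticity of Lemma~\ref{lemGXalpha}, so no separate small-ball probability is ever needed. Two further pieces your plan omits: dividing by $e(x)-1$ (legitimate since $\int_0^1\log\abs{e(x)-1}\,dx=0$) is what produces the simple-pole structure on which these derivative bounds rest, and it forces a separate endpoint estimate near $x=0,1$ (the paper's Lemma~\ref{lemLargeValsAtEndsInt}, giving a contribution $\ll-\epsilon\log\epsilon$). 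Without these ingredients---or a genuine substitute for the anti-concentration step---the exchange of your $M\to\infty$ truncation limit with $p\to\infty$ is unjustified, and part~\ref{partMeasure} remains unproved.
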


\subsection{Companion Littlewood polynomials}\label{subsecTurynLW}
The polynomials $F_{p,t}(x)$ are not Littlewood polynomials, owing to the single coefficient of $0$ at the $x^{p-t}$ term when $0<t\leq p$.
(It is convenient here to represent the Fekete polynomial using $t=p$.)
For each prime $p$ and each $t$ in this range, we define a pair of companion Littlewood polynomials by changing this coefficient to $\pm1$.
Let
\begin{equation}\label{eqnFptpm}
F_{p,t}^{\pm}(x) := F_{p,t}(x) \pm x^{p-t},
\end{equation}
and define
\begin{equation}\label{eqnGptpm}
G_{p,t}^{\pm}(k,x) := \zeta_p^{(k-1)t} \frac{F_{p,t}^{\pm}(\zeta_p^{k+x})}{F_{p,t}(\zeta_p)}.
\end{equation}
We establish the following corollary in Section~\ref{secProofCor} in connection with the problems of Mahler and Newman.

\begin{cor}\label{corTuryn}
Let $\alpha\in[0,1]$.
\begin{enumerate}[label=(\roman*)]
\item\label{partPhiLW}
If $\varphi : C[0,1] \to \mathbb{C}$ is bounded and continuous and $x\in[0,1]$, then
\[
\lim_{p\to\infty} \frac{1}{p} \sum_{k=0}^{p-1} \varphi\left(G_{p,\round{\alpha p}}^{\pm}(k, x)\right) = \mathbb{E}(\varphi(G_{\mathbb{X},\alpha}(x))).
\]
\item\label{partLqLW}
For any $q>0$, the $L_q$ norm of a Littlewood polynomial corresponding to the Turyn polynomial with relative shift $\alpha$ satisfies
\[
\lim_{p\to\infty} \frac{\big\lVert F_{p,\round{\alpha p}}^{\pm} \big\rVert_q}{\sqrt{p}}
= \kappa_q(\alpha).
\]
\item\label{partMeasureLW}
The Mahler measure of a Littlewood polynomial corresponding to the Turyn polynomial with relative shift $\alpha$ satisfies
\[
\lim_{p\to\infty} \frac{M(F_{p,\round{\alpha p}}^{\pm})}{\sqrt{p}}
= \kappa_0(\alpha).
\]
\end{enumerate}
\end{cor}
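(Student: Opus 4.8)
The plan is to reduce every assertion to its counterpart in Theorem~\ref{thmTuryn} by exploiting that $F_{p,t}^{\pm}$ differs from $F_{p,t}$ only by the single monomial $\pm x^{p-t}$ of modulus~$1$ on the unit circle. Define the companion interpolant in analogy with \eqref{eqnGpt} by $G_{p,t}^{\pm}(k,x) := \zeta_p^{(k-1)t} F_{p,t}^{\pm}(\zeta_p^{k+x})/F_{p,t}(\zeta_p)$, normalizing by $F_{p,t}(\zeta_p)$ exactly as in the Turyn case. Using \eqref{eqnFptpm} and \eqref{eqnTurynz}, the difference is
\[
G_{p,t}^{\pm}(k,x) - G_{p,t}(k,x) = \pm\frac{\zeta_p^{(k-1)t}\,\zeta_p^{(p-t)(k+x)}}{a(p)\zeta_p^{-t}\sqrt{p}},
\]
whose modulus is exactly $1/\sqrt{p}$, uniformly in $k\in\{0,\dots,p-1\}$ and $x\in[0,1]$. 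Viewing $G_{p,t}^{\pm}(k,\cdot)$ and $G_{p,t}(k,\cdot)$ as elements of $C[0,1]$, this gives $\dbars{G_{p,t}^{\pm}(k,\cdot)-G_{p,t}(k,\cdot)}_\infty = 1/\sqrt{p}$ for every $k$. Since $\abs{F_{p,t}(\zeta_p)}=\sqrt{p}$, the same normalization yields $\abs{G_{p,t}^{\pm}(k,x)} = \abs{F_{p,t}^{\pm}(\zeta_p^{k+x})}/\sqrt{p}$, so the arc decomposition $u=(k+x)/p$ produces the reductions $\bigl(\dbars{F_{p,\round{\alpha p}}^{\pm}}_q/\sqrt{p}\bigr)^q = \frac1p\sum_{k=0}^{p-1}\int_0^1\abs{G_{p,\round{\alpha p}}^{\pm}(k,x)}^q\,dx$ and $\log\bigl(M(F_{p,\round{\alpha p}}^{\pm})/\sqrt{p}\bigr) = \frac1p\sum_{k=0}^{p-1}\int_0^1\log\abs{G_{p,\round{\alpha p}}^{\pm}(k,x)}\,dx$, mirroring \eqref{eqnTurynToInterp}.

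For part~\ref{partPhiLW} I would argue at the level of the empirical measures $\mu_p^{\pm} := \frac1p\sum_{k}\delta_{G_{p,\round{\alpha p}}^{\pm}(k,\cdot)}$ and $\mu_p := \frac1p\sum_{k}\delta_{G_{p,\round{\alpha p}}(k,\cdot)}$ on $C[0,1]$. Pairing the two families along the common index $k$ gives a coupling under which matched elements lie within $1/\sqrt{p}$ in the sup norm, so the L\'evy--Prokhorov (equivalently, bounded-Lipschitz) distance between $\mu_p^{\pm}$ and $\mu_p$ is at most $1/\sqrt{p}\to0$. As Theorem~\ref{thmTuryn}\ref{partPhi} states precisely that $\mu_p$ converges weakly to the law of $G_{\mathbb{X},\alpha}$, the same holds for $\mu_p^{\pm}$, which is \ref{partPhiLW}. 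Part~\ref{partLqLW} follows from the pointwise bound $\bigl|\,\abs{G_{p,t}^{\pm}(k,x)}-\abs{G_{p,t}(k,x)}\,\bigr|\le 1/\sqrt{p}$: for $q\ge1$, the triangle inequality in $L^q$ of the product measure $\frac1p\sum_k\delta_k\times dx$ gives $\bigl|\dbars{F^{\pm}}_q-\dbars{F}_q\bigr|/\sqrt{p}\le 1/\sqrt{p}$, while for $0<q<1$ the subadditivity $\bigl|a^q-b^q\bigr|\le\abs{a-b}^q$ shows the $q$th powers differ by at most $p^{-q/2}$; either way the limit equals $\kappa_q(\alpha)$ by Theorem~\ref{thmTuryn}\ref{partLq}.

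Part~\ref{partMeasureLW} is the substantive case, since $\log\abs{\cdot}$ is unbounded below and a uniform perturbation of $\abs{G}$ by $1/\sqrt{p}$ can drastically change $\log\abs{G}$ where $G$ is small. The upper bound is painless: $M(F^{\pm})\le\dbars{F^{\pm}}_q$ for every $q>0$, so $\limsup_p M(F_{p,\round{\alpha p}}^{\pm})/\sqrt{p}\le\kappa_q(\alpha)$ by \ref{partLqLW}, and letting $q\to0^+$ with $\kappa_q(\alpha)\to\kappa_0(\alpha)$ yields $\limsup_p M(F_{p,\round{\alpha p}}^{\pm})/\sqrt{p}\le\kappa_0(\alpha)$. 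For the matching lower bound I would upgrade the weak convergence of \ref{partPhiLW} to convergence of $\frac1p\sum_k\int_0^1\log\abs{G^{\pm}}\,dx$ toward $\log\kappa_0(\alpha)$ by establishing uniform integrability of $\log\abs{G_{p,\round{\alpha p}}^{\pm}(k,x)}$ under the measures $\mu_p^{\pm}$. The positive part is controlled for free by Parseval, since $\frac1p\sum_k\int_0^1\abs{G^{\pm}}^2\,dx = \dbars{F^{\pm}}_2^2/p = 1$; the genuine difficulty, and the main obstacle, is uniform integrability of the negative part $\log^-\abs{G^{\pm}}$, equivalently a small-ball estimate $\sup_p\frac1p\sum_k\int_0^1\abs{G_{p,\round{\alpha p}}^{\pm}(k,x)}^{-\eta}\,dx<\infty$ for some $\eta>0$. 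This does not transfer mechanically from the corresponding estimate for $F_{p,t}$, because $\abs{G^{\pm}}$ is small near the zeros of the companion polynomial rather than those of $F_{p,t}$. I expect to resolve it by re-running the small-ball argument from the proof of Theorem~\ref{thmTuryn}\ref{partMeasure}: that estimate should use only features shared by $F_{p,t}^{\pm}$ and $F_{p,t}$---integer coefficients of bounded size, degree $p-1$, the normalization $\dbars{\cdot}_2=\sqrt{p}$, and the supremum bound $\dbars{F_{p,t}^{\pm}}_\infty\ll\sqrt{p}\log p$ supplied by Theorem~\ref{thmMontgomery}---so the same bound holds verbatim for the companion polynomials. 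With this uniform integrability in hand, applying \ref{partPhiLW} to the truncations $\max(\log\abs{\cdot},-M)$ and letting $M\to\infty$ delivers $\liminf_p M(F_{p,\round{\alpha p}}^{\pm})/\sqrt{p}\ge\kappa_0(\alpha)$, completing \ref{partMeasureLW}.
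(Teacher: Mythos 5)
Your parts \ref{partPhiLW} and \ref{partLqLW} are correct and essentially match the paper: the paper also writes $G_{p,t}^{\pm}(k,x)=G_{p,t}(k,x)\pm e(x(1-t/p))/(a(p)\sqrt{p})$ and uses that this perturbation is $O(1/\sqrt{p})$ uniformly in $k$ and $x$; your bounded-Lipschitz coupling for \ref{partPhiLW} and your direct Minkowski/subadditivity argument for \ref{partLqLW} are, if anything, more careful than the paper's brief treatment.

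The genuine gap is in part \ref{partMeasureLW}, precisely at the step you yourself flag as the main obstacle. Your plan is to ``re-run the small-ball argument,'' justified by the claim that it ``should use only features shared by $F_{p,t}^{\pm}$ and $F_{p,t}$ --- integer coefficients of bounded size, degree $p-1$, the normalization $\dbars{\cdot}_2=\sqrt{p}$, and the supremum bound.'' That premise is false, and no small-values estimate can follow from those generic features alone. The paper's Lemma~\ref{lemSmallValsInt} does not proceed that way: its proof works with the explicit interpolation series $\widetilde{H}_{p,\alpha}(k,x)=\sum_{\abs{j}<p/2}\leg{k-j}{p}\zeta_p^{jt}/(j+x)$, isolating the $j=0$ and $j=-1$ terms (the $1/x^2$, $1/(x-1)^2$ pieces) to show that for every $k$ at least one of $\abs{\widetilde{H}'_{p,\alpha}(k,x)}$, $\abs{\widetilde{H}''_{p,\alpha}(k,x)}$ is bounded below by an absolute constant on $[0,1]$; this exploits the unimodular Legendre-symbol coefficients of the interpolation, not height/degree/norm data of the polynomial. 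So ``the same bound holds verbatim'' does not follow from your stated reasons, and --- as you correctly observe --- $\abs{G^{\pm}}$ is small near the zeros of the companion polynomial, so something quantitative must be proved. The paper's actual fix is concrete: pass to $H_{p,\alpha}^{\pm}(k,x)-H_{p,\alpha}(k,x)=\pm\pi(\cot(\pi x)+i)/(a(p)\zeta_p^{tx}\sqrt{p})$ and check that its first and second derivatives are $O\!\left(1/(x^2\sqrt{p})\right)$ and $O\!\left(1/(x^3\sqrt{p})\right)$, i.e.\ a factor $\sqrt{p}$ smaller than the corresponding terms of the series for $\widetilde{H}'$ and $\widetilde{H}''$; hence the two-derivative lower bounds, and with them all of Lemma~\ref{lemSmallValsInt} and the conclusion of Section~\ref{subsecPartMeasure}, persist for the companion polynomials once $p$ is large. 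A second, more minor issue: your upper bound invokes $\kappa_q(\alpha)\to\kappa_0(\alpha)$ as $q\to0^+$, which the paper never proves and which itself requires integrability of $\log^-\abs{G_{\mathbb{X},\alpha}}$ (available from \eqref{eqnSmallHX}); the paper sidesteps the upper/lower-bound split entirely by running the full limiting argument of Theorem~\ref{thmTuryn}\ref{partMeasure} for $H^{\pm}$ directly.
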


\section{Proof of Theorem~\ref{thmTuryn}}\label{secProofThm}

We adapt the method of \cite{KLM23} to establish Theorem~\ref{thmTuryn}.
Before proceeding with the proofs of these statements, we require a preliminary result concerning the random process $G_{\mathbb{X},\alpha}(x)$ from \eqref{eqnRandProc}.

\begin{lemma}\label{lemGXalpha}
The functions $\Re G_{\mathbb{X},\alpha}(x)$ and $\Im G_{\mathbb{X},\alpha}(x)$ are almost surely real analytic on $0\leq x\leq1$.
\end{lemma}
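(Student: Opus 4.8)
The plan is to prove the stronger statement that, almost surely, $G_{\mathbb{X},\alpha}$ extends to a function holomorphic on a fixed complex neighborhood of $[0,1]$; real analyticity of $\Re G_{\mathbb{X},\alpha}$ and $\Im G_{\mathbb{X},\alpha}$ is then immediate, since a $\mathbb{C}$-valued function holomorphic near the real interval has a convergent power series at each real point, whose real and imaginary parts furnish the required real power series. To set this up I would complexify, replacing the real variable $x$ in \eqref{eqnRandProc} by a complex variable $z$ ranging over a small neighborhood $\Omega = \{z : \operatorname{dist}(z,[0,1]) < 1/4\}$, which contains no integer other than $0$ and $1$. The first observation is that each summand
\[
t_j(z) := \frac{e(\alpha j)(e(z)-1)}{2\pi i(j+z)}\,\mathbb{X}(j)
\]
is in fact entire: its only possible pole is at $z=-j$, but there $e(z)-1=e^{2\pi i z}-1$ has a simple zero cancelling the simple pole of $(j+z)^{-1}$. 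By Weierstrass's theorem it therefore suffices to show that, almost surely, the series $\sum_{j\in\mathbb{Z}} t_j(z)$ converges uniformly on compact subsets of $\Omega$.

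The heart of the matter is that this series is only conditionally convergent, the terms decaying like $1/\abs{j}$ with random signs, so the delicate point is to interchange almost-sure convergence with the demand that convergence hold for all $z\in\Omega$ simultaneously. I would handle this by quarantining the conditionally convergent part as a single $z$-independent random constant. First peel off the two terms $j=0$ and $j=-1$, whose poles $z=0$ and $z=1$ lie in $\overline{\Omega}$; these are $t_0(z)$ and $t_{-1}(z)$, already entire by the cancellation above. For the remaining indices write
\[
\frac{1}{j+z} = \frac{1}{j} - \frac{z}{j(j+z)},
\]
so that $\sum_{j\notin\{0,-1\}} \frac{e(\alpha j)}{j+z}\mathbb{X}(j)$ splits as
\[
\sum_{j\notin\{0,-1\}} \frac{e(\alpha j)}{j}\,\mathbb{X}(j) \;-\; z\sum_{j\notin\{0,-1\}} \frac{e(\alpha j)}{j(j+z)}\,\mathbb{X}(j).
\]
The first sum is independent of $z$; it is a series of independent, mean-zero, bounded random variables with $\sum_j \abs{e(\alpha j)/j}^2 = \sum_j j^{-2} < \infty$, hence converges almost surely by Kolmogorov's theorem (applied to the one-sided sums over $j\ge1$ and $j\le-2$, whence the symmetric sum converges). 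The second sum converges absolutely and uniformly on $\Omega$: since $\Omega$ stays a positive distance from every remaining pole $z=-j$, one has $\abs{j(j+z)}\gg j^2$ uniformly on $\Omega$, and because $\abs{\mathbb{X}(j)}=1$ the bound $\sum_j \sup_{z\in\Omega}\abs{e(\alpha j)/(j(j+z))} \ll \sum_j j^{-2} < \infty$ is deterministic, so this remainder is holomorphic on $\Omega$ for every realization.

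Putting the pieces together, on the single almost-sure event that the $z$-independent series $\sum_{j\notin\{0,-1\}} e(\alpha j)\mathbb{X}(j)/j$ converges, the function $G_{\mathbb{X},\alpha}$ coincides on $\Omega$ with a finite sum of holomorphic functions, namely the entire contributions $t_0$ and $t_{-1}$, the prefactor $(e(z)-1)/(2\pi i)$ times the convergent constant, and that prefactor times the absolutely convergent holomorphic remainder; it is therefore holomorphic on $\Omega$, and restricting to $[0,1]$ gives the claim. The main obstacle, as flagged, is exactly the conditional convergence together with the need for uniform control in $z$, and the decomposition above is what resolves it: it concentrates all of the genuinely random, non-absolute convergence into one constant governed by a single application of Kolmogorov's theorem, while every $z$-dependent piece converges absolutely and deterministically.
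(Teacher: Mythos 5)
Your proposal is correct, and it takes a genuinely different route from the paper's proof. The paper stays on the real interval: it splits off the three terms with $\abs{j}\leq1$ (analytic for every realization) and then asserts, for the tail $R_{\mathbb{X},\alpha}$, uniform factorial bounds $\abs{R_{\mathbb{X},\alpha}^{(k)}(x)}<(2\pi)^k k!$ for all $k\geq1$ and all realizations, deducing real analyticity from the classical derivative-growth criterion; the only probabilistic ingredient there is the almost sure convergence of the tail series itself. You instead complexify: every summand is entire because the zero of $e(z)-1$ at $z=-j$ cancels the pole, and the splitting $1/(j+z)=1/j-z/(j(j+z))$ isolates all of the conditional convergence in a single $z$-independent random constant, controlled by one application of Kolmogorov's theorem, while the $z$-dependent remainder converges absolutely and deterministically; Weierstrass's theorem then yields, on one almost sure event, a holomorphic extension to a fixed complex neighborhood of $[0,1]$. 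Your version buys a stronger conclusion (a holomorphic extension of uniform width, hence analyticity with a uniform radius of convergence), and it is rigorous precisely where the paper's write-up is delicate: differentiating the tail term by term produces pieces of size comparable to $1/\abs{j}$ (derivatives falling on the numerator $r_\alpha(j,x)$ leave the factor $1/(j+x)$ intact), so the paper's displayed derivative bound cannot be obtained by summing absolute values, and in fact no bound independent of the realization can hold, since the $x$-independent conditionally convergent series that emerge after the correct grouping (for instance $\sum_{\abs{j}\geq2}\mathbb{X}(j)\cos(2\pi\alpha j)/j$) are unbounded as random variables. Making the paper's real-variable argument precise requires essentially your decomposition---extract those random constants by Kolmogorov, then bound the absolutely convergent remainder and its derivatives---so your proof not only differs from the paper's but in effect supplies the detail its proof glosses over; what the paper's route retains, once patched in this way, is that it is entirely real-variable and avoids complex function theory.
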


\begin{proof}
We consider $\Re G_{\mathbb{X},\alpha}(x)$; the argument for $\Im G_{\mathbb{X},\alpha}(x)$ is essentially the same.
Let $r_\alpha(j, x) = \sin(2\pi(\alpha j+x)) - \sin(2\pi\alpha j)$ so that
\[
\Re G_{\mathbb{X},\alpha} =
\sum_{\abs{j}\leq1} \frac{r_\alpha(j,x)}{2\pi(j+x)} \mathbb{X}(j)
+ \sum_{\abs{j}\geq2} \frac{r_\alpha(j,x)}{2\pi(j+x)} \mathbb{X}(j).
\]
The first sum is certainly analytic over all possible realizations of $\mathbb{X}(-1)$, $\mathbb{X}(0)$, and $\mathbb{X}(1)$.
Let $R_{\mathbb{X},\alpha}(x)$ denote the second sum.
This is almost surely bounded on $[0,1]$, and for each $k\geq1$, and all realizations of the $\mathbb{X}(j)$, we have
\[
\abs{R^{(k)}_{\mathbb{X},\alpha}(x)}
\leq
k! (2\pi)^{k-1} \sum_{\abs{j}\geq 2} \frac{1}{(\abs{j}-1)^{k+1}}
=
2k! (2\pi)^{k-1} \zeta(k+1)
<
(2\pi)^k k!
\]
uniformly for $x\in[0,1]$, so $\Re G_{\mathbb{X},\alpha}(x)$ is almost surely real analytic.
\end{proof}

Each part of Theorem~\ref{thmTuryn} is considered in turn below.

\subsection{Proof of Part~\ref{partPhi}}\label{subsecPartPhi}
For fixed $\alpha\in[0,1]$, we consider $G_{p,\round{\alpha p}}(k,x)$ with $x\in[0,1]$ as a random process (in $k$) over $\mathbb{F}_p$ with the uniform distribution.
Part~\ref{partPhi} is equivalent to the statement that the sequence (in $p$) of processes $(G_{p,\round{\alpha p}}(k,x))$ converges in law to the random process $G_{\mathbb{X},\alpha}(x)$ with $x\in[0,1]$.
Convergence in law is established by first proving the convergence of the finite moments of the processes as $p\to\infty$ in Lemma~\ref{lemMoments}, and then showing that the former sequence is relatively compact in Lemma~\ref{lemEquicont}.

\begin{lemma}\label{lemMoments}
Let $L$ be a positive integer, let $0\leq x_1<x_2<\cdots<x_L\leq1$ be real numbers, and let $r_1,\ldots, r_L$ and $s_1,\ldots,s_L$ be nonnegative integers.
Let $r=r_1+\cdots+r_L$ and $s=s_1+\cdots+s_L$.
Then
\begin{equation*}
\begin{split}
&\frac{1}{p} \sum_{k=0}^{p-1} \prod_{\ell=1}^L G_{p,\round{\alpha p}}(k, x_\ell)^{r_\ell} \overline{G_{p,\round{\alpha p}}(k, x_\ell)}^{s_\ell}\\
&\qquad= \mathbb{E}\left(\prod_{\ell=1}^L G_{\mathbb{X},\alpha}(x_\ell)^{r_\ell} \overline{G_{\mathbb{X},\alpha}(x_\ell)}^{s_\ell}\right) + O_{r,s}\left(\frac{(\log p)^{r+s}}{\sqrt{p}}\right).
\end{split}
\end{equation*}
\end{lemma}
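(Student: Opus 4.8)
The plan is to expand both sides into sums indexed by tuples of integers and reduce the average over $k$ to a multiplicative character sum. Writing $t=\round{\alpha p}$ and using Theorem~\ref{thmAbsTuryn}, each factor $G_{p,t}(k,x_\ell)$ is a sum over $\abs{j}<p/2$ whose entire $k$-dependence is carried by the Legendre symbol $\leg{k-j}{p}$, and likewise for the conjugated factors (the symbol being real). Multiplying out the $r+s$ factors and interchanging the order of summation, I would obtain
\[
\frac{1}{p}\sum_{k=0}^{p-1}\prod_\ell G_{p,t}(k,x_\ell)^{r_\ell}\overline{G_{p,t}(k,x_\ell)}^{s_\ell}
= \sum_{\vec{\jmath}} C(\vec{\jmath})\, S(\vec{\jmath}),
\]
where $\vec{\jmath}=(j_1,\dots,j_{r+s})$ runs over tuples with each $\abs{j_m}<p/2$, the coefficient $C(\vec{\jmath})$ is the product of the (possibly conjugated) factors $\frac{(e(x_\ell)-1)\zeta_p^{j_m t}}{p(\zeta_p^{j_m+x_\ell}-1)}$, and $S(\vec{\jmath})=\frac{1}{p}\sum_k \leg{\prod_m(k-j_m)}{p}$ by multiplicativity of the symbol.

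The dichotomy driving the estimate is whether the polynomial $\prod_m(k-j_m)$ is a perfect square modulo $p$, equivalently whether every value occurring among the $j_m$ occurs an even number of times (since the $j_m$ lie in a window of length $p$, distinct integers stay distinct mod $p$). For such \emph{paired} tuples the symbol equals $1$ except at the at most $r+s$ points $k=j_m$, so $S(\vec{\jmath})=1+O_{r,s}(1/p)$; for \emph{unpaired} tuples Weil's bound \cite[Thm.\ 11.23]{IK04} gives $\abs{S(\vec{\jmath})}\ll_{r,s}1/\sqrt{p}$. This matches exactly the behaviour on the random side, where $\mathbb{E}(\prod_m \mathbb{X}(j_m))$ equals $1$ for paired tuples and $0$ otherwise. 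The unpaired contribution is then bounded by $\frac{1}{\sqrt p}\sum_{\vec{\jmath}}\abs{C(\vec{\jmath})}$; since $\sum_{\vec\jmath}\abs{C(\vec\jmath)}$ factors over the $r+s$ positions and, using $\abs{\zeta_p^{j+x}-1}=2\abs{\sin(\pi(j+x)/p)}\ge 4\abs{j+x}/p$ for $\abs{j}<p/2$, each positional sum satisfies $\sum_{\abs{j}<p/2}\frac{\abs{e(x_\ell)-1}}{p\abs{\zeta_p^{j+x_\ell}-1}}\ll\log p$, this contribution is $O_{r,s}((\log p)^{r+s}/\sqrt p)$, which is precisely the claimed error.

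It remains to show that the paired main term $\sum_{\text{paired }\vec\jmath,\,\abs{j}<p/2}C(\vec\jmath)$ converges to $\mathbb{E}(\prod_\ell G_{\mathbb{X},\alpha}(x_\ell)^{r_\ell}\overline{G_{\mathbb{X},\alpha}(x_\ell)}^{s_\ell})$, which expands as the same paired sum with $C$ replaced by the limiting coefficient $C_\infty(\vec\jmath)$ built from $\frac{(e(x_\ell)-1)e(\alpha j)}{2\pi i(j+x_\ell)}$. For this I would use the pointwise convergences $\frac{1}{p(\zeta_p^{j+x}-1)}\to\frac{1}{2\pi i(j+x)}$ and $\zeta_p^{jt}=e(j\round{\alpha p}/p)\to e(\alpha j)$, together with the quantitative facts that $\abs{\beta^{(p)}_m-\beta^{(\infty)}_m}\ll 1/p$ uniformly for $\abs{j}<p/2$ and that both $\abs{\beta^{(p)}_m},\abs{\beta^{(\infty)}_m}\ll 1/\abs{j+x_\ell}$. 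Because in a paired tuple every distinct value occurs with multiplicity at least $2$, each block of shared indices contributes an absolutely convergent sum of order $1/\abs{v}^{2}$ or better; a telescoping estimate for the difference of products then bounds the coefficient-approximation error by $O_{r,s}(\log p/p)$, and the tail from blocks with $\abs{v}\ge p/2$ by $O_{r,s}(1/p)$. Both are negligible against $(\log p)^{r+s}/\sqrt p$, so the paired sum converges to the stated expectation at an acceptable rate.

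I expect the crux to be the input from Weil's bound: it is exactly the odd-multiplicity (non-square) structure that is detected simultaneously by the character sum and by the vanishing of odd moments of the $\mathbb{X}(j)$, and the $1/\sqrt p$ saving it provides is what controls every off-diagonal term. The principal technical care lies in combining this uniform saving with the logarithmic growth $\sum_{\abs{j}<p/2}\abs{\beta}\ll\log p$ of the coefficient sums to land on the sharp error $O_{r,s}((\log p)^{r+s}/\sqrt p)$, and in confirming that the diagonal coefficient sums converge absolutely and are well approximated by their truncations. The remaining ingredients—the sine lower bound, the Taylor approximation of $\zeta_p^{j+x}-1$, and the dominated-convergence bookkeeping over the paired blocks—are routine.
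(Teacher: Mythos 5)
Your proposal is correct, and its first half coincides with the paper's argument: expand each factor via Theorem~\ref{thmAbsTuryn}, push the average over $k$ onto a product of Legendre symbols, invoke Weil's bound so that $\frac1p\sum_k\leg{\prod_m(k-j_m)}{p}$ matches $\mathbb{E}\bigl(\prod_m\mathbb{X}(j_m)\bigr)$ up to $O_{r,s}(1/\sqrt p)$, and absorb this against the positional coefficient sums $\sum_{\abs{j}<p/2}\abs{\beta_{p,\alpha}(j,x_\ell)}\ll\log p$ to get the error $O_{r,s}((\log p)^{r+s}/\sqrt p)$. Where you diverge is the treatment of the surviving main term. The paper never returns to the combinatorics of paired tuples: it recognizes the diagonal sum as $\mathbb{E}\bigl(\prod_\ell G^*_{\mathbb{X},\alpha,p}(x_\ell)^{r_\ell}\overline{G^*_{\mathbb{X},\alpha,p}(x_\ell)}^{s_\ell}\bigr)$ for the auxiliary random sum $G^*_{\mathbb{X},\alpha,p}(x)=\sum_{\abs{j}<p/2}\beta_{p,\alpha}(j,x)\mathbb{X}(j)$, proves the single $L^2$ estimate $\mathbb{E}\bigl(\abs{G_{\mathbb{X},\alpha}(x)-G^*_{\mathbb{X},\alpha,p}(x)}^2\bigr)\ll 1/p$ uniformly in $x$, and then transfers it to all mixed moments by a binomial expansion with Cauchy--Schwarz and the bound $\abs{G^*_{\mathbb{X},\alpha,p}}\ll\log p$. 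You instead compare the paired sums deterministically: telescoping on the coefficients $\abs{C(\vec\jmath)-C_\infty(\vec\jmath)}$ with the uniform $O(1/p)$ coefficient error, using the fact that every block in a paired tuple has multiplicity at least $2$ to get absolute convergence, which yields the sharper rate $O_{r,s}(\log p/p)$ for this step plus an $O_{r,s}(1/p)$ tail. Both are valid; the paper's probabilistic route avoids the block-multiplicity bookkeeping and scales painlessly to high mixed moments, while your combinatorial route is more self-contained (no auxiliary processes) and makes visible exactly why the diagonal converges, at the cost of having to justify the interchange of expectation and infinite summation for $\mathbb{E}\bigl(\prod_\ell G_{\mathbb{X},\alpha}(x_\ell)^{r_\ell}\overline{G_{\mathbb{X},\alpha}(x_\ell)}^{s_\ell}\bigr)$ and to track the pairing structure with constants depending on $r+s$ --- points you flag as routine, correctly, but which should be written out in a final version.
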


\begin{proof}
For convenience, let
\[
\beta_{p,\alpha}(j, x) := \frac{\zeta_p^{j\round{\alpha p}}(e(x)-1)}{p(\zeta_p^{j+x}-1)}.
\]
Using Theorem~\ref{thmAbsTuryn}, we compute
\begin{align*}
&\prod_{\ell=1}^L G_{p,\round{\alpha p}}(k, x_\ell)^{r_\ell} \overline{G_{p,\round{\alpha p}}(k, x_\ell)}^{s_\ell}\\
&\qquad =
\prod_{\ell=1}^L \left(\sum_{\abs{j}<p/2}\leg{k-j}{p} \beta_{p,\alpha}(j, x_\ell) \right)^{r_\ell}
\prod_{\ell=1}^L \left(\sum_{\abs{j}<p/2}\leg{k-j}{p} \overline{\beta_{p,\alpha}(j, x_\ell)} \right)^{s_\ell}\\
&\qquad =
\sum_{\mathbf{j}_1,\ldots,\mathbf{j}_L} \prod_{\ell=1}^L \beta_{p,\alpha}(\mathbf{j}_\ell, x_\ell) \prod_{\sigma=1}^{r_\ell+s_\ell} \leg{k-j_{\ell,\sigma}}{p},
\end{align*}
where the integer vectors $\mathbf{j}_\ell = (j_{\ell,1} , \ldots, j_{\ell,r_\ell+s_\ell})$ range over all possibilities where each component satisfies $\abs{j_{\ell,\sigma}}<p/2$, and
\[
\beta_{p,\alpha}(\mathbf{j}_\ell, x_\ell) := \prod_{\sigma=1}^{r_\ell} \beta_{p,\alpha}(j_{\ell,\sigma}, x_\ell) \prod_{\sigma=1}^{s_\ell} \overline{\beta_{p,\alpha}(j_{\ell,r_\ell+\sigma}, x_\ell)}.
\]
Thus
\begin{equation*}
\begin{split}
&\frac{1}{p}\sum_{k=0}^{p-1} \prod_{\ell=1}^L G_{p,\round{\alpha p}}(k, x_\ell)^{r_\ell} \overline{G_{p,\round{\alpha p}}(k, x_\ell)}^{s_\ell}\\
&\qquad =
\sum_{\mathbf{j}_1,\ldots,\mathbf{j}_L} \left(\prod_{\ell=1}^L \beta_{p,\alpha}(\mathbf{j}_\ell, x_\ell)\right) \frac{1}{p} \sum_{k=0}^{p-1} \prod_{\ell=1}^L \prod_{\sigma=1}^{r_\ell+s_\ell} \leg{k-j_{\ell,\sigma}}{p}.
\end{split}
\end{equation*}
Employing Weil's bound for character sums \cite[Theorem 2C]{Schmidt76}, we have
\[
\frac{1}{p} \sum_{k=0}^{p-1} \prod_{\ell=1}^L \prod_{\sigma=1}^{r_\ell+s_\ell} \leg{k-j_{\ell,\sigma}}{p}
=
\mathbb{E}\left(\prod_{\ell=1}^L \prod_{\sigma=1}^{r_j+s_j} \mathbb{X}(j_{\ell,\sigma}) \right) + O\left(\frac{r+s}{\sqrt{p}}\right).
\]
Now
\[
\frac{1}{\sqrt{p}} \sum_{\mathbf{j}_1,\ldots,\mathbf{j}_L} \prod_{\ell=1}^L \abs{\beta_{p,\alpha}(\mathbf{j}_\ell, x_\ell)}
=
\frac{1}{\sqrt{p}} \prod_{\ell=1}^L \left(\sum_{\abs{j}<p/2} \abs{\beta_{p,\alpha}(j,x_\ell)} \right)^{r_\ell+s_\ell},
\]
and $\abs{\beta_{p,\alpha}(j,x)}\ll1/\abs{j}$ for $\abs{j}\geq2$, independent of $\alpha$, so
 $\sum_{\abs{j}<p/2} \abs{\beta_{p,\alpha}(j,x)} \ll \log p$, and therefore
\begin{equation}\label{eqnMoments}
\begin{split}
&\frac{1}{p}\sum_{k=0}^{p-1} \prod_{\ell=1}^L G_{p,\round{\alpha p}}(k, x_\ell)^{r_\ell} \overline{G_{p,\round{\alpha p}}(k, x_\ell)}^{s_\ell}\\
&\qquad =
\mathbb{E}\left(\sum_{\mathbf{j}_1,\ldots,\mathbf{j}_L} \left(\prod_{\ell=1}^L \beta_{p,\alpha}(\mathbf{j}_\ell, x_\ell)\right) \prod_{\ell=1}^L \prod_{\sigma=1}^{r_j+s_j} \mathbb{X}(j_{\ell,\sigma}) \right) + O_{r,s}\left(\frac{(\log p)^{r+s}}{\sqrt{p}}\right)\\
&\qquad =
\mathbb{E}\left(\prod_{\ell=1}^L G^*_{\mathbb{X},\alpha,p}(x_\ell)^{r_\ell} \overline{G^*_{\mathbb{X},\alpha,p}(x_\ell)}^{s_\ell}\right)  + O_{r,s}\left(\frac{(\log p)^{r+s}}{\sqrt{p}}\right),
\end{split}
\end{equation}
where
\[
G^*_{\mathbb{X},\alpha,p}(x) := \sum_{\abs{j}<p/2} \beta_{p,\alpha}(j,x) \mathbb{X}(j).
\]
We also define $G_{\mathbb{X},\alpha,p}(x)$ for $x\in[0,1]$ by truncating the series \eqref{eqnRandProc} defining $G_{\mathbb{X},\alpha}(x)$,
\[
G_{\mathbb{X},\alpha,p}(x) := \sum_{\abs{j}<p/2} \frac{e(\alpha j)(e(x)-1)}{2\pi i (j+x)} \mathbb{X}(j).
\]
Using Minkowski's inequality, and the fact that the $\mathbb{X}(j)$ are independent, we have
\begin{align*}
&\mathbb{E}\left(\abs{G_{\mathbb{X},\alpha}(x) - G^*_{\mathbb{X},\alpha,p}(x)}^2\right)\\
&\qquad \ll
\mathbb{E}\left(\abs{G_{\mathbb{X},\alpha}(x) - G_{\mathbb{X},\alpha,p}(x)}^2\right)
+
\mathbb{E}\left(\abs{G_{\mathbb{X},\alpha,p}(x) - G^*_{\mathbb{X},\alpha,p}(x)}^2\right)\\
&\qquad =
\sum_{\abs{j}>p/2} \frac{\abs{e(x)-1}^2}{4\pi^2(j+x)^2}
+
\sum_{\abs{j}<p/2} \abs{e(x)-1}^2 \abs{\frac{e(j\round{\alpha p}/p)}{p(e((j+x)/p)-1)} - \frac{e(\alpha j)}{2\pi i(j+x)}}^2\\
&\qquad \ll
\sum_{\abs{j}>p/2} \frac{1}{j^2} + \sum_{\abs{j}<p/2} \left(\frac{\sin^2(\pi x)}{\pi^2(j+x)^2} + O\left(\frac{1}{p^2}\right)\right) \ll \frac{1}{p}
\end{align*}
uniformly in $x$.
Let $\mathbb{Y}_{\ell,\alpha} := G_{\mathbb{X},\alpha}(x_\ell)-G^*_{\mathbb{X},\alpha,p}(x_\ell)$, so that $\mathbb{E}(\abs{\mathbb{Y}_{\ell,\alpha}}^2) \ll 1/p$ for $1\leq\ell\leq L$.
Noting from Cauchy--Schwarz that $\mathbb{E}(\abs{\mathbb{Y}_{\ell_1,\alpha}\cdots\mathbb{Y}_{\ell_k,\alpha}}) \ll_k 1/\sqrt{p}$ for any $k\geq1$ and $1\leq \ell_1 \leq \cdots \leq \ell_k \leq L$, we compute
\begin{align*}
&\mathbb{E}\left(\prod_{\ell=1}^L G_{\mathbb{X},\alpha}(x_\ell)^{r_\ell} \overline{G_{\mathbb{X},\alpha}(x_\ell)}^{s_\ell}\right)
=
\mathbb{E}\left(\prod_{\ell=1}^L (G^*_{\mathbb{X},\alpha,p}(x_\ell)+\mathbb{Y}_{\ell,\alpha})^{r_\ell} (\overline{G^*_{\mathbb{X},\alpha,p}(x_\ell)+\mathbb{Y}_{\ell,\alpha}})^{s_\ell}\right)\\
&\quad =
\sum_{\substack{(k_1,\ldots,k_{2L})\\0\leq k_\ell\leq r_\ell\\0\leq k_{L+\ell}\leq s_\ell}}
\hspace{-2ex}\mathbb{E}\left(\prod_{\ell=1}^L \binom{r_\ell}{k_\ell} \binom{s_\ell}{k_{L+\ell}} \mathbb{Y}_{\ell,\alpha}^{k_\ell} \overline{\mathbb{Y}}_{\ell,\alpha}^{k_{L+\ell}} G^*_{\mathbb{X},\alpha,p}(x_\ell)^{r_\ell-k_\ell} \overline{G^*_{\mathbb{X},\alpha,p}(x_\ell)}^{s_\ell-k_{L+\ell}}\right)\\
&\quad =
\mathbb{E}\left(\prod_{\ell=1}^L G^*_{\mathbb{X},\alpha,p}(x_\ell)^{r_\ell} \overline{G^*_{\mathbb{X},\alpha,p}(x_\ell)}^{s_\ell}\right)
 + O_{r,s}\Biggl((\log p)^{r+s}
  \hspace{-4ex}\sum_{\substack{(k_1,\ldots,k_{2L})\neq\mathbf{0}\\0\leq k_\ell\leq r_\ell\\0\leq k_{L+\ell}\leq s_\ell}}
  \hspace{-2.5ex}\mathbb{E}\left(\prod_{\ell=1}^L \abs{\mathbb{Y}_{\ell,\alpha}}^{k_\ell+k_{L+\ell}}\right)
\Biggr)\\
&\quad =
\mathbb{E}\left(\prod_{\ell=1}^L G^*_{\mathbb{X},\alpha,p}(x_\ell)^{r_\ell} \overline{G^*_{\mathbb{X},\alpha,p}(x_\ell)}^{s_\ell}\right) + O_{r,s}\left(\frac{(\log p)^{r+s}}{\sqrt{p}}\right),
\end{align*}
where we used that $\abs{G^*_{\mathbb{X},\alpha,p}(x)} \ll \log p$ for $x\in[0,1]$.
The statement follows by combining this with \eqref{eqnMoments}.
\end{proof}

Proving that a sequence of random processes is relative compact is equivalent (by Prokhorov's theorem) to establishing its tightness, and a criterion of Kolmogorov provides a method for this \cite[Chapter~1]{Krylov02}.
This criterion asserts that if there exist constants $\delta>0$, $\gamma>0$, and $C\geq0$ such that
\[
\mathbb{E}\left(\abs{G_{p,\round{\alpha p}}(k, x) - G_{p,\round{\alpha p}}(k, y)}^\gamma\right) \leq C(y-x)^{1+\delta} 
\]
for all $p\geq p_0$ and any $x<y$ in $[0,1]$, then $(G_{p,\round{\alpha p}}(k, x))$ is tight.
The following lemma establishes this equicontinuity result with $p_0=3$, $\gamma=2$, and $\delta=1/2$.

\begin{lemma}\label{lemEquicont}
Let $\alpha\in[0,1]$.
There exists an absolute constant $C>0$ such that if $p\geq3$ is prime, $t$ is a nonnegative integer, and $0\leq x<y\leq1$, then
\[
\frac{1}{p} \sum_{k=0}^{p-1} \abs{G_{p,\round{\alpha p}}(k,x) - G_{p,\round{\alpha p}}(k,y)}^2 \leq C(y-x)^{3/2}.
\]
\end{lemma}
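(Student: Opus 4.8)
The plan is to first collapse the average over $k$ into a single sum over the frequency index $j$ by using the explicit interpolation of Theorem~\ref{thmAbsTuryn} together with a quadratic character-sum evaluation, and then to evaluate the resulting frequency sum \emph{exactly} by exploiting orthogonality over a complete residue system. Writing $t=\round{\alpha p}$ and
\[
g_j(u) := \frac{e(u)-1}{p(\zeta_p^{j+u}-1)},
\]
Theorem~\ref{thmAbsTuryn} gives $G_{p,t}(k,u)=\sum_{|j|<p/2}\leg{k-j}{p}\zeta_p^{jt}g_j(u)$ (extended to $u\in\{0,1\}$ by continuity). Setting $d_j:=\zeta_p^{jt}(g_j(x)-g_j(y))$, I would expand
\[
\frac 1p\sum_{k=0}^{p-1}\bigl|G_{p,t}(k,x)-G_{p,t}(k,y)\bigr|^2 = \sum_{|j_1|,|j_2|<p/2}d_{j_1}\overline{d_{j_2}}\;\frac 1p\sum_{k=0}^{p-1}\leg{k-j_1}{p}\leg{k-j_2}{p}.
\]
The inner sum $\sum_{k}\leg{k-j_1}{p}\leg{k-j_2}{p}$ is classically $p-1$ when $j_1=j_2$ and $-1$ otherwise (distinct indices with $|j|<p/2$ are distinct mod $p$), so dividing by $p$ and summing collapses the right-hand side to $\sum_{|j|<p/2}|d_j|^2-\tfrac 1p\bigl|\sum_j d_j\bigr|^2\le\sum_{|j|<p/2}|g_j(x)-g_j(y)|^2$, where the unimodular factor $\zeta_p^{jt}$ disappears in modulus. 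In particular the eventual bound is uniform in $\alpha$ (and in $t$), as required.

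The crux is then to estimate $\sum_{|j|<p/2}|g_j(x)-g_j(y)|^2$, and here the key observation is that $e(u)=(\zeta_p^{j+u})^p$, so the finite geometric series yields
\[
g_j(u)=\frac 1p\sum_{m=0}^{p-1}e\!\left(\frac{m(j+u)}{p}\right),
\]
an entire function agreeing with the quotient on $(0,1)$. Writing $c_m:=e(mx/p)-e(my/p)$ gives $g_j(x)-g_j(y)=\tfrac 1p\sum_m e(mj/p)\,c_m$. Since $\{j:|j|<p/2\}=\{-(p-1)/2,\dots,(p-1)/2\}$ is a complete residue system mod $p$, the orthogonality relation $\sum_{|j|<p/2}e((m_1-m_2)j/p)=p\,[m_1=m_2]$ (valid because $|m_1-m_2|<p$) decouples the frequencies, and the double sum telescopes to
\[
\sum_{|j|<p/2}|g_j(x)-g_j(y)|^2=\frac 1p\sum_{m=0}^{p-1}|c_m|^2=\frac 4p\sum_{m=0}^{p-1}\sin^2\!\left(\frac{\pi m(y-x)}{p}\right).
\]

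Finally, $\sin^2\theta\le\theta^2$ and $\sum_{m=0}^{p-1}m^2<p^3/3$ give $\tfrac 4p\sum_m\sin^2(\pi m(y-x)/p)\le\tfrac{4\pi^2}{3}(y-x)^2\le\tfrac{4\pi^2}{3}(y-x)^{3/2}$ because $y-x\le1$, establishing the lemma with $C=4\pi^2/3$. Notice this argument in fact delivers exponent $2$; the stated $3/2$ is all that the Kolmogorov tightness criterion requires ($\gamma=2$, $\delta=1/2$). The only genuinely load-bearing step is the orthogonality reduction over the complete residue system, which is what turns the Fekete-twisted frequency sum into an elementary trigonometric sum; the opening character-sum identity is classical, and the concluding estimate is a one-liner.
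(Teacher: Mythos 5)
Your proof is correct, and its second half takes a genuinely different — and sharper — route than the paper's. Both arguments begin the same way: expand the square, average over $k$, and invoke the classical evaluation $\sum_{k=0}^{p-1}\leg{(k-j_1)(k-j_2)}{p}=-1$ for $j_1\not\equiv j_2 \pmod p$; you execute this step exactly, obtaining $\sum_j\abs{d_j}^2-\tfrac1p\bigl|\sum_j d_j\bigr|^2$, where the paper settles for a factor-$2$ loss via Cauchy--Schwarz. The decisive divergence is in estimating $\sum_{\abs{j}<p/2}\abs{b_j(x)-b_j(y)}^2$, where $b_j(u)=\frac{e(u)-1}{p(\zeta_p^{j+u}-1)}$ is what you call $g_j$. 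The paper approximates $b_j$ by the archimedean kernel $\frac{e(u)-1}{2\pi i(j+u)}$, which costs an additive error of order $1/p$ after summing over $j$; since $1/p$ need not be $O((y-x)^{3/2})$ when $x$ and $y$ are very close, the paper is forced into a case split, handling $y-x\le(\log p)^{-4}$ separately via Bernstein's inequality combined with the Montgomery-type supremum bound of Theorem~\ref{thmMontgomery}. You avoid all of this by evaluating the sum exactly: the observation $(\zeta_p^{j+u})^p=e(u)$ turns $b_j$ into the finite geometric sum $\frac1p\sum_{m=0}^{p-1}e(m(j+u)/p)$, and Parseval over the complete residue system $\{j:\abs{j}<p/2\}$ gives precisely $\frac4p\sum_{m=0}^{p-1}\sin^2(\pi m(y-x)/p)\le\frac{4\pi^2}{3}(y-x)^2$. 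This buys several things at once: no case split, no reliance on Theorem~\ref{thmMontgomery} (which the paper only establishes for sufficiently large $p$, whereas the lemma is claimed for all $p\ge3$) or on Bernstein's inequality, an explicit constant $C=4\pi^2/3$ uniform in $\alpha$, $t$, and $p$, and the stronger H\"older exponent $2$ in place of $3/2$ — more than Kolmogorov's tightness criterion requires. The paper's approximation-based route is the one that matches its overall strategy of comparison with the limiting random model, but for this particular lemma your exact Plancherel computation is both cleaner and quantitatively stronger.
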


\begin{proof}
By combining Bernstein's theorem for polynomials with Theorem~\ref{thmMontgomery}, we have
\[
\sup_{\abs{z}=1} \abs{F'_{p,\round{\alpha p}}(z)} \ll p^{3/2} \log p,
\]
so using the mean value theorem we conclude
\begin{align*}
\abs{G_{p,\round{\alpha p}}(k,x) - G_{p,\round{\alpha p}}(k,y)}
&= \frac{1}{\sqrt{p}} \abs{F_{p,\round{\alpha p}}(\zeta_p^{k+x}) - F_{p,\round{\alpha p}}(\zeta_p^{k+y})}\\
&\ll p\log p \abs{\frac{k+x}{p} - \frac{k+y}{p}}\\
&\ll (y-x)\log p.
\end{align*}
If $y-x \leq 1/(\log p)^4$, then
\[
\frac{1}{p} \sum_{k=0}^{p-1} \abs{G_{p,\round{\alpha p}}(k,x) - G_{p,\round{\alpha p}}(k,y)}^2 \ll (y-x)^2 (\log p)^2 \leq (y-x)^{3/2},
\]
so suppose $y-x > 1/(\log p)^4$.

Let $b_j(x) =\beta_{p,0}(j,x) = \frac{e(x)-1}{p(\zeta_p^{j+x}-1)}$ so
\begin{align*}
&\abs{G_{p,\round{\alpha p}}(k,x) - G_{p,\round{\alpha p}}(k,y)}^2 =
\Bigl|\sum_{\abs{j}<p/2} \zeta_p^{j\round{\alpha p}}\leg{k-j}{p} \left( b_j(x) - b_j(y)\right)\Bigr|^2\\
&\quad=
\sum_{\abs{j}<p/2} \abs{b_j(x) - b_j(y)}^2 +\\
&\qquad
\!\!\sum_{\substack{\abs{j}<p/2\\\abs{j'}<p/2\\j\neq j'}}  \!\! \zeta_p^{(j-j')\round{\alpha p}}\leg{(k-j)(k-j')}{p} (b_j(x) - b_j(y))(\overline{b_{j'}(x) - b_{j'}(y)}).
\end{align*}
Since $0<\abs{j-j'}<p$, from a result of Gauss we have $\sum_{k=0}^{p-1} \leg{(k-j)(k-j')}{p}=-1$ for $p\geq3$, so using Cauchy--Schwarz we find that
\begin{align*}
&\frac{1}{p} \sum_{k=0}^{p-1} \abs{G_{p,\round{\alpha p}}(k,x) - G_{p,\round{\alpha p}}(k,y)}^2\\
&\qquad =
\sum_{\abs{j}<p/2} \abs{b_j(x) - b_j(y)}^2
- \frac{1}{p} 
\sum_{\substack{\abs{j}<p/2\\\abs{j'}<p/2\\j\neq j'}} \zeta_p^{(j-j')\round{\alpha p}} (b_j(x) - b_j(y))(\overline{b_{j'}(x) - b_{j'}(y)})\\
&\qquad \leq
\sum_{\abs{j}<p/2} \abs{b_j(x) - b_j(y)}^2
+ \frac{1}{p} 
\sum_{\substack{\abs{j}<p/2\\\abs{j'}<p/2\\j\neq j'}}  \abs{b_j(x) - b_j(y)}\abs{b_{j'}(x) - b_{j'}(y)}\\
&\qquad < 2\sum_{\abs{j}<p/2} \abs{b_j(x) - b_j(y)}^2.
\end{align*}
Let $g_j(x) := \frac{e(x)-1}{2\pi i(j+x)}$ so that $b_j(x) = g_j(x) + O(1/p)$.
Since $\abs{g'_j(x)} \ll \min(1,1/j)$ for all $x\in[0,1]$ and $\abs{j}<p/2$, there exist absolute constants $c_1$ and $c_2$ such that
\[
\abs{b_j(x) - b_j(y)} \leq c_1 (y-x) \min\left(1,\frac{1}{j}\right) + \frac{c_2}{p}.
\]
Thus
\begin{align*}
\frac{1}{p} \sum_{k=0}^{p-1} \abs{G_{p,\round{\alpha p}}(k,x) - G_{p,\round{\alpha p}}(k,y)}^2
&\ll
\sum_{0<j<p/2} \frac{(y-x)^2}{j^2} + \frac{1}{p^2} \sum_{\abs{j}<p/2} 1\\
&\ll
(y-x)^2 + \frac{1}{p}.
\end{align*}
Certainly $p\gg(\log p)^6$, so $p^{-2/3} \ll (\log p)^{-4}$, and since $y-x>(\log p)^{-4}$ we have $1/p \ll (y-x)^{3/2}$, so we conclude
\[
\frac{1}{p} \sum_{k=0}^{p-1} \abs{G_{p,\round{\alpha p}}(k,x) - G_{p,\round{\alpha p}}(k,y)}^2 \ll (y-x)^{3/2}.\qedhere
\]
\end{proof}
This completes the proof of part~\ref{partPhi}.

\subsection{Proof of Part~\ref{partLq}}\label{subsecPartLq}
Given $q>0$, we select $\varphi_q$ to be the bounded and continuous functional on $C[0,1]$ given by
\[
\varphi_q(f) = \int_0^1 \abs{f(u)}^q\,du.
\]
Then for fixed $\alpha\in[0,1]$, using \eqref{eqnTurynz} and \eqref{eqnGpt} we have
\begin{align*}
\int_0^1 \abs{F_{p,\round{\alpha p}}(e(u))}^q\,du
&=
\frac{1}{p} \sum_{k=0}^{p-1} \int_0^1 \abs{F_{p,\round{\alpha p}}(\zeta_p^{k+x})}^q\,dx\\
&=
p^{q/2}\cdot\frac{1}{p} \sum_{k=0}^{p-1} \int_0^1 \abs{G_{p,\round{\alpha p}}(k,x)}^q\,dx.
\end{align*}
After suitable rearrangement and taking limits,  part~\ref{partLq} then follows from part~\ref{partPhi}.

\subsection{Proof of Part~\ref{partMeasure}}\label{subsecPartMeasure}
We cannot apply part~\ref{partPhi} directly in this case, since the functional $\varphi$ on $C[0,1]$ given by $\varphi(f) = \int_0^1 \log\abs{f(u)}\,du$ is not continuous.
For convenience, let
\begin{equation}\label{eqnHdefn}
H_{p,\alpha}(k,x) := \frac{2\pi i G_{p,\round{\alpha p}}(k,x)}{e(x)-1},
\qquad
H_{\mathbb{X},\alpha}(x) := \frac{2\pi i G_{\mathbb{X},\alpha}(x)}{e(x)-1}.
\end{equation}
Given $\epsilon>0$.
Certainly $H_{p,\alpha}(k,x)$ is continuous over the interval $[\epsilon,1-\epsilon]$, and Lemma~\ref{lemGXalpha} implies that $H_{\mathbb{X},\alpha}(x)$ is almost surely continuous on this same interval.
By the arguments of Section~\ref{subsecPartPhi} therefore the sequence of processes $(H_{p,\round{\alpha p}}(k,x))$ converges in law to the process $H_{\mathbb{X},\alpha}(x)$ in the space $C[\epsilon,1-\epsilon]$.
Define the functional $\varphi_\epsilon$ on this space by
\[
\varphi_\epsilon(f) := \int_\epsilon^{1-\epsilon} \log(\abs{f(x)}) \mathbbold{1}_{\abs{f(x)}\geq\epsilon}\,dx.
\]
Since this is bounded and continuous, by part~\ref{partPhi} we have
\begin{equation}\label{eqnHmost}
\begin{split}
&\lim_{p\to\infty} \frac{1}{p} \sum_{k=0}^{p-1} \int_\epsilon^{1-\epsilon} \log\left(\abs{H_{p,\alpha}(k,x)}\right) \mathbbold{1}_{\abs{H_{p,\alpha}(x)}\geq\epsilon}\,dx\\
&\qquad =
 \int_\epsilon^{1-\epsilon} \mathbb{E}\left(\log\left(\abs{H_{\mathbb{X},\alpha}(x)} \right)\mathbbold{1}_{\abs{H_{\mathbb{X},\alpha}(x)}\geq\epsilon}\right)\,dx.
\end{split}
\end{equation}
It remains to show that the remaining integrals of interest are small: integrating over $[0,1]$ where the function in question is less than $\epsilon$ in absolute value, and integrating over $[0,\epsilon]$ and over $[1-\epsilon,1]$ where it is at least $\epsilon$ in absolute value.
These two cases are treated in turn by Lemmas~\ref{lemSmallValsInt} and~\ref{lemLargeValsAtEndsInt}.
First however we define
\[
\widetilde{H}_{p,\alpha}(k,x) := \sum_{\abs{j}<p/2} \leg{k-j}{p} \frac{\zeta_p^{j\round{\alpha p}}}{j+x}
\]
and show that it suffices to consider $\widetilde{H}_{p,\alpha}(k,x)$ in place of $H_{p,\alpha}(k,x)$ in our subsequent arguments.

\begin{lemma}\label{lemHstar}
For any $\alpha\in[0,1]$ and any integer $0\leq k<p$, we have
\begin{gather}\label{Hstar1}
\sup_{0<x<1} \abs{\widetilde{H}'_{p,\alpha}(k,x) - H'_{p,\alpha}(k,x)} = O\left(\frac{1}{p}\right),\\\label{Hstar2}
\sup_{0<x<1} \abs{\widetilde{H''}_{p,\alpha}(k,x) - H''_{p,\alpha}(k,x)} = O\left(\frac{1}{p^2}\right).
\end{gather}
\end{lemma}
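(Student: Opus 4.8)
The plan is to reduce both estimates to a single termwise bound, exploiting the fact that $H_{p,\alpha}$ and $\widetilde{H}_{p,\alpha}$ carry identical arithmetic weights. Combining Theorem~\ref{thmAbsTuryn} with the definition \eqref{eqnHdefn}, the factor $e(x)-1$ cancels and we obtain
\[
H_{p,\alpha}(k,x) = \frac{2\pi i}{p}\sum_{\abs{j}<p/2} \leg{k-j}{p} \frac{\zeta_p^{j\round{\alpha p}}}{\zeta_p^{j+x}-1},
\]
whereas $\widetilde{H}_{p,\alpha}(k,x)$ is the same sum with $\zeta_p^{j\round{\alpha p}}/(j+x)$ in place of the $j$th summand. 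Writing $a_j(x) = 2\pi i/\bigl(p(\zeta_p^{j+x}-1)\bigr)$ and $\widetilde{a}_j(x) = 1/(j+x)$, the common weights $\leg{k-j}{p}\zeta_p^{j\round{\alpha p}}$ have modulus at most $1$ and do not depend on $x$, so for each $m\in\{1,2\}$ I would bound
\[
\sup_{0<x<1} \abs{\partial_x^m \widetilde{H}_{p,\alpha}(k,x) - \partial_x^m H_{p,\alpha}(k,x)} \leq \sum_{\abs{j}<p/2} \sup_{0<x<1} \abs{\partial_x^m a_j(x) - \partial_x^m \widetilde{a}_j(x)}.
\]

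The heart of the matter is the single function controlling every summand. Set $u = (j+x)/p$, so that each $x$-differentiation contributes a factor $1/p$, giving $\partial_x^m a_j(x) = p^{-(m+1)}\,\tfrac{d^m}{du^m}\bigl(2\pi i/(e(u)-1)\bigr)$ and $\partial_x^m \widetilde{a}_j(x) = p^{-(m+1)}\,\tfrac{d^m}{du^m}(1/u)$. Hence
\[
\partial_x^m a_j(x) - \partial_x^m \widetilde{a}_j(x) = p^{-(m+1)}\, g^{(m)}(u), \qquad g(u) := \frac{2\pi i}{e(u)-1} - \frac{1}{u}.
\]
The Laurent expansion $1/(e(u)-1) = 1/(2\pi i u) - \tfrac12 + \pi i u/6 + \cdots$ shows that $2\pi i/(e(u)-1) = 1/u - \pi i - \pi^2 u/3 + \cdots$, so the principal part cancels and $g$ extends analytically across $u=0$. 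Its only poles lie at the nonzero integers, which the range $\abs{j}<p/2$ and $0<x<1$ keep outside a fixed compact interval such as $[-3/4,3/4]$; thus $g$ and all its derivatives are bounded there, and $\abs{\partial_x^m a_j(x) - \partial_x^m \widetilde{a}_j(x)} \ll_m p^{-(m+1)}$ uniformly in $j$, $k$, and $x$.

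Summing this uniform bound over the $O(p)$ admissible values of $j$ gives $\ll_m p^{-(m+1)}\cdot p = p^{-m}$, which is precisely $O(1/p)$ for $m=1$, establishing \eqref{Hstar1}, and $O(1/p^2)$ for $m=2$, establishing \eqref{Hstar2}. I expect the only genuine obstacle to be the pole cancellation: one must verify that, after the constant factor $2\pi i$ is supplied, the singular part of $1/(e(u)-1)$ agrees with that of $1/u$, so that $g$ is not merely finite but genuinely analytic and hence has uniformly bounded derivatives independent of $j$ and $p$. Once the leading Taylor coefficients of $1/(e(u)-1)$ are recorded this is a short check, and the remaining majorization is routine.
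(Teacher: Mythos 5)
Your proof is correct and follows essentially the same route as the paper: both arguments reduce the claim to a termwise bound of $O(p^{-(m+1)})$ on the difference of the $m$-th derivatives of corresponding summands, and then sum over the $O(p)$ admissible values of $j$. The paper obtains the termwise bound by differentiating explicitly into trigonometric form (its formula for $H'_{p,\alpha}$ involves $\pi^2/\bigl(p^2\sin^2(\pi(j+x)/p)\bigr)$, and similarly for the second derivative) and Taylor-expanding, which is precisely the boundedness of your $g'(u)$ and $g''(u)$ on a compact interval avoiding the nonzero-integer poles; your single-function scaling argument is the same estimate in a cleaner, derivative-order-uniform packaging.
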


\begin{proof}
Using Theorem~\ref{thmAbsTuryn} and \eqref{eqnHdefn}, we compute
\[
\widetilde{H}'_{p,\alpha}(x) = -\sum_{\abs{j}<p/2} \leg{k-j}{p} \frac{\zeta_p^{j\round{\alpha p}}}{(j+x)^2}
\]
and
\begin{align*}
H'_{p,\alpha}(k,x) &=
-\frac{\pi^2}{p^2}\sum_{\abs{j}<p/2} \leg{k-j}{p} \frac{\zeta_p^{j\round{\alpha p}}}{\sin^2(\pi(j+x)/p)}\\
&= -\sum_{\abs{j}<p/2} \leg{k-j}{p} \zeta_p^{j\round{\alpha p}} \left(\frac{1}{(j+x)^2} + O\left(\frac{1}{p^2}\right)\right)
\end{align*}
so \eqref{Hstar1} follows easily.
Differentiating again, we find
\[
\widetilde{H}''_{p,\alpha}(x) = 2\sum_{\abs{j}<p/2} \leg{k-j}{p} \frac{\zeta_p^{j\round{\alpha p}}}{(j+x)^3}
\]
and
\begin{align*}
H''_{p,\alpha}(k,x) &=
\frac{2\pi^3}{p^3}\sum_{\abs{j}<p/2} \leg{k-j}{p} \frac{\zeta_p^{j\round{\alpha p}}\cos(\pi(j+x)/p)}{\sin^3(\pi(j+x)/p)}\\
&= 2\sum_{\abs{j}<p/2} \leg{k-j}{p} \zeta_p^{j\round{\alpha p}} \left(\frac{1}{(j+x)^3} + O\left(\frac{1}{p^3}\right)\right),
\end{align*}
which yields \eqref{Hstar2}.
\end{proof}

We now consider the two remaining integrals.

\begin{lemma}\label{lemSmallValsInt}
Let $\alpha\in[0,1]$.
For primes $p\to\infty$, each $0\leq k<p$, and sufficiently small $\epsilon>0$, we have
\begin{gather}\label{eqnSmallH}
\int_0^1 \log\bigl(\big| H_{p,\alpha}(k,x)\big|\bigr) \mathbbold{1}_{\abs{H_{p,\alpha}(k,x)}\leq\epsilon}\, dx = O\left(\epsilon^{0.24}\right),\\\label{eqnsmallwtH}
\int_0^1 \log\bigl(\big|\widetilde{H}_{p,\alpha}(k,x)\big|\bigr) \mathbbold{1}_{\abs{\widetilde{H}_{p,\alpha}(k,x)}\leq\epsilon}\, dx = O\left(\epsilon^{0.24}\right).
\end{gather}
In addition, almost surely
\begin{equation}\label{eqnSmallHX}
\int_0^1 \log\bigl(\big| H_{\mathbb{X},\alpha}(x)\big|\bigr) \mathbbold{1}_{\abs{H_{\mathbb{X},\alpha}(x)}\leq\epsilon}\, dx = O\left(\epsilon^{0.24}\right).
\end{equation}
\end{lemma}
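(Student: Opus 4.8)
The plan is to reduce all three displays to a single \emph{sublevel-set} estimate and then integrate the logarithmic singularity against it. Write $F$ for any one of $H_{p,\alpha}(k,\cdot)$, $\widetilde{H}_{p,\alpha}(k,\cdot)$, or $H_{\mathbb{X},\alpha}$, and set $\mu_F(\delta)=\big|\{x\in[0,1]:|F(x)|\le\delta\}\big|$. On $\{|F|\le\epsilon\}$ with $\epsilon<1$ one has $\log|F|<0$, so writing $-\log t=\int_t^1 ds/s$ and applying Fubini yields
\[
\int_0^1\big|\log|F(x)|\big|\,\mathbbold{1}_{|F(x)|\le\epsilon}\,dx
=\int_0^\epsilon\frac{\mu_F(s)}{s}\,ds+\mu_F(\epsilon)\,\log\frac1\epsilon .
\]
Hence it suffices to establish a uniform bound $\mu_F(\delta)\ll\delta^{a}$ for some fixed $a>1/4$: the first term is then $\ll\epsilon^{a}$ and the second is $\ll\epsilon^{a}\log(1/\epsilon)$, and since $\epsilon^{1/4}\log(1/\epsilon)\ll\epsilon^{0.24}$ the small loss from $1/4$ down to $0.24$ simply absorbs the logarithm. (For $F=H_{\mathbb{X},\alpha}$ the bound is asserted almost surely, and the implied constant is allowed to depend on the realization.)

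The heart of the matter is this sublevel estimate. Since $\{|F|\le\delta\}\subseteq\{|\Re F|\le\delta\}$, I would work with the real function $u=\Re F$, which is smooth on $(0,1)$. Using the explicit series for $\widetilde{H}_{p,\alpha}$ and its derivatives, and transferring to $H_{p,\alpha}$ by Lemma~\ref{lemHstar}, I would first obtain uniform upper bounds $\|u'\|_\infty,\|u''\|_\infty\ll 1$ on each compact subinterval of $(0,1)$; here the derivative series converge because their terms decay like $(j+x)^{-2}$ and $(j+x)^{-3}$. Near the endpoints the normalization $|F_{p,t}(\zeta_p^k)|=\sqrt p$ from Lemma~\ref{lemmaGaussSum} forces $|F(k,0)|,|F(k,1)|\asymp 1$, and combined with the derivative bound this gives $|F|\gg 1$ on initial and terminal segments $[0,\eta_0]\cup[1-\eta_0,1]$. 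Thus for small $\delta$ the sublevel set lies entirely in the bulk $[\eta_0,1-\eta_0]$, where the $C^2$ bounds apply and on which I would run a van der Corput--type sublevel argument: after partitioning the bulk into a number of subintervals, bounded independently of $p$, on each of which one of the first few derivatives of $u$ is bounded below by an absolute constant, the sublevel measure on each piece is $\ll\delta^{1/k}$ for the corresponding order $k$, and summing produces the exponent $1/4$.

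The main obstacle is precisely the uniformity of this partition in $p$. The function $\widetilde{H}_{p,\alpha}(k,\cdot)$ is a rational function whose number of real zeros on $(0,1)$ may grow with $p$, so no soft argument that simply counts zeros, and no single-derivative van der Corput bound (the second derivative of $u$ has no uniform lower bound in the bulk, as one sees from its leading terms), can be applied directly. What must be shown is that the \emph{near-multiple} zeros---places where several successive derivatives of $u$ are simultaneously small---cannot accumulate, uniformly in $p$, $k$, and $\alpha$; this is where the arithmetic input (the interpolation formula of Theorem~\ref{thmAbsTuryn} together with Weil's bound, as already used in Lemma~\ref{lemMoments}) will be needed, rather than pure real analysis. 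The uniform second-derivative bound is what caps each local contribution and keeps the exponent at $1/4$.

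Finally I would assemble the three displays. The estimate for $\widetilde{H}_{p,\alpha}$ gives \eqref{eqnsmallwtH} directly, and Lemma~\ref{lemHstar} transfers the derivative bounds, and hence the sublevel estimate, to $H_{p,\alpha}$, yielding \eqref{eqnSmallH}. For \eqref{eqnSmallHX}, Lemma~\ref{lemGXalpha} shows that $u=\Re H_{\mathbb{X},\alpha}$ is almost surely real analytic on $[0,1]$; being almost surely not identically zero, it has only finitely many zeros there, and a nondegeneracy property of the process (almost surely $u$ and $u'$ share no zero) makes these zeros simple. The sublevel set $\{|u|\le\delta\}$ is then a union of finitely many intervals of length $\ll_\omega\delta$, so $\mu_{H_{\mathbb{X},\alpha}}(\delta)\ll_\omega\delta$, which gives the claimed bound almost surely with room to spare.
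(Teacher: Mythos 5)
Your layer-cake reduction to a sublevel-set bound is sound, but the proposal has a genuine gap at exactly the step you yourself call ``the heart of the matter'': the uniform estimate $\mu_F(\delta)\ll\delta^{1/4}$ is never proved. You correctly identify that the obstruction is ruling out near-multiple zeros \emph{uniformly in $p$, $k$, and $\alpha$}, but you then only assert that this ``must be shown'' and guess that it requires arithmetic input (Weil-type bounds). That uniform nondegeneracy statement is the entire content of the lemma, and your guess about how to get it points in the wrong direction. The paper settles it by an elementary, explicit computation with no character-sum input: writing $t=\round{\alpha p}$, the terms $j\in\{0,-1\}$ dominate the series for the derivatives (with neighboring pairs of terms used in the edge cases $k=0$ and $k=p-1$), giving
\[
\abs{\widetilde{H}'_{p,\alpha}(k,x)} \;\geq\; \abs{\frac{1}{x^2}+\leg{k(k+1)}{p}\frac{\zeta_p^{-t}}{(x-1)^2}} - \bigl(2\zeta(2)-1\bigr),
\]
where the main term is $\geq 8\abs{\cos(2\pi t/p)}$ or $8\abs{\sin(2\pi t/p)}$ according to the sign $\leg{k(k+1)}{p}$, while the analogous bound for $\widetilde{H}''_{p,\alpha}$ has sine and cosine interchanged. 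Since $\abs{\cos}$ and $\abs{\sin}$ cannot both be small, at least one of $\abs{\widetilde{H}'_{p,\alpha}(k,\cdot)}$, $\abs{\widetilde{H}''_{p,\alpha}(k,\cdot)}$ exceeds an absolute constant on all of $[0,1]$, uniformly in $p$, $k$, $\alpha$ --- precisely the non-accumulation of near-multiple zeros you need. (The paper then concludes with a H\"older argument pairing $\int\abs{\widetilde{H}'}^{-1/3}$ against $\int\abs{\widetilde{H}'}/\abs{\widetilde{H}}^{1/25}$, which is where the exponent $0.24$ comes from; that finishing step is interchangeable with your sublevel decomposition, but only once the dichotomy is in hand. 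Note also that the dichotomy concerns the complex moduli of the derivatives, so your restriction to $u=\Re F$ discards the structure that makes it provable; your observation that $u''$ has no uniform lower bound is beside the point.)

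Two secondary problems. First, your endpoint analysis is incorrect: both $H_{p,\alpha}(k,x)=2\pi i\,G_{p,\round{\alpha p}}(k,x)/(e(x)-1)$ and $\widetilde{H}_{p,\alpha}(k,x)$ blow up like $1/x$ as $x\to0^+$ (the $j=0$ term), so $\abs{F}$ is not $\asymp1$ at the endpoints, and your ``bounded value plus bounded derivative'' reasoning does not apply there (the derivatives blow up too); the conclusion that the sublevel set avoids the endpoints is still true, but because $\abs{F}\to\infty$, which is the fact the paper uses. Second, for \eqref{eqnSmallHX} your argument rests on an unproven nondegeneracy claim (that almost surely $u$ and $u'$ share no zero) and, even granting it, produces a bound $\mu(\delta)\ll_\omega\delta$ with a constant depending on the realization. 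That is too weak for how the lemma is used in the proof of Theorem~\ref{thmTuryn}\ref{partMeasure}, where this error term must survive inside an expectation; one would need uniform integrability of your random constant. The paper avoids this entirely by running the same deterministic derivative dichotomy for the process (the dominant coefficients are $\pm1$ for every realization), which yields an absolute constant almost surely.
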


\begin{proof}
We first argue that for each $p$, $\alpha$, and $k$, at least one of $|\widetilde{H}'_{p,\alpha}(k,x)|$ and $|\widetilde{H}''_{p,\alpha}(k,x)|$ is bounded away from $0$ over $0<x<1$.
Suppose first $0<k<p-1$.
Fix a prime $p$, and for convenience let $t=\round{\alpha p}$.
Then
\begin{align*}
\abs{\widetilde{H}'_{p,\alpha}(k,x)}
&=
\abs{\sum_{\abs{j}<p/2} \leg{k-j}{p} \frac{\zeta_p^{jt}}{(j+x)^2}}\\
&\geq
\abs{\frac{1}{x^2} + \leg{k(k+1)}{p}\frac{\zeta_p^{-t}}{(x-1)^2}} - \sum_{\substack{\abs{j}<p/2\\j\neq0,-1}} \frac{1}{(j+x)^2}.
\end{align*}
Since $0\leq x\leq1$, we have
\begin{align*}
\sum_{\substack{\abs{j}<p/2\\j\neq0,-1}} \frac{1}{(j+x)^2}
&\leq
\sum_{j\geq1} \left(\frac{1}{(j+x)^2} + \frac{1}{(j+1-x)^2}\right)\\
&\leq
\sum_{j\geq1} \left(\frac{1}{j^2} + \frac{1}{(j+1)^2}\right)
= 2\zeta(2)-1.
\end{align*}
When $\leg{k}{p}=\leg{k+1}{p}$, we compute
\[
\abs{\frac{1}{x^2} + \frac{\zeta_p^{-t}}{(x-1)^2}}^2 = 
\frac{1}{x^4} + \frac{2\cos(2\pi t)}{x^2(x-1)^2} + \frac{1}{(x-1)^4},
\]
and for fixed $t$ this expression achieves its minimum over $0\leq x\leq1$ at $x=1/2$.
It follows that for each fixed $t$ we have
\[
\abs{\frac{1}{x^2} + \frac{\zeta_p^{-t}}{(x-1)^2}} \geq 8 \abs{\cos(\pi t/p)}.
\]
Similarly, when $\leg{k}{p}=-\leg{k+1}{p}$, we find
\[
\abs{\frac{1}{x^2} - \frac{\zeta_p^{-t}}{(x-1)^2}} \geq 8\abs{\sin(\pi t/p)}.
\]
Thus for $0\leq x\leq1$ we have
\[
\abs{\widetilde{H}'_{p,\alpha}(k,x)} \geq
\begin{cases}
8\abs{\cos(\pi t/p)} - 2\zeta(2) + 1, & \textrm{if $\leg{k}{p}=\leg{k+1}{p}$},\\[\thinsp]
8\abs{\sin(\pi t/p)} - 2\zeta(2) + 1, & \textrm{if $\leg{k}{p}=-\leg{k+1}{p}$}.
\end{cases}
\]
Treating $\widetilde{H}''_{p,\alpha}(k,x)$ in the same way, we note
\[
\sum_{\substack{\abs{j}<p/2\\j\neq0,-1}} \frac{1}{(j+x)^3}
= \sum_{1\leq j<p/2} \frac{1}{(j+x)^3} - \sum_{2\leq j<p/2} \frac{1}{(j-x)^3} < \zeta(3)
\]
and find
\[
\frac{1}{2}\abs{\widetilde{H}''_{p,\alpha}(k,x)} \geq
\begin{cases}
16\abs{\sin(\pi t/p)} - \zeta(3), & \textrm{if $\leg{k}{p}=\leg{k+1}{p}$},\\[\thinsp]
16\abs{\cos(\pi t/p)} - \zeta(3), & \textrm{if $\leg{k}{p}=-\leg{k+1}{p}$}.
\end{cases}
\]
It follows that for any fixed $\alpha\in[0,1]$, and any prime $p$, at least one of $|\widetilde{H}'_{p,\alpha}(k,x)|$ and $|\widetilde{H}''_{p,\alpha}(k,x)|$ exceeds $5.4645\ldots$ over the entire interval $[0,1]$.

For $k=0$, we employ
\[
\abs{\widetilde{H}'_{p,\alpha}(0,x)}
\geq
 \abs{\frac{\zeta_p^t}{(x+1)^2} + \leg{-1}{p}\frac{\zeta_p^{-t}}{(x-1)^2}} - \sum_{2\leq \abs{j}<p/2} \frac{1}{(j+x)^2}.
\]
We first compute
\[
\abs{\frac{\zeta_p^t}{(x+1)^2} + \leg{-1}{p}\frac{\zeta_p^{-t}}{(x-1)^2}}^2 = \frac{1}{(x+1)^4} + \frac{1}{(x-1)^4} + \leg{-1}{p}\frac{2\cos(4\pi t/p)}{(x^2-1)^2}.
\]
For any fixed $t$ this is increasing in $x$, and we record that its value at $x=0$ is either $4\cos^2(2\pi t/p)$ or $4\sin^2(2\pi t/p)$, depending on whether $\leg{-1}{p}$ is $1$ or $-1$ respectively, and that its value at $x=1/2$ is either
\[
\frac{64}{81}\left(16 + 9\cos^2(2\pi t/p)\right)
\quad \mathrm{or} \quad
\frac{64}{81}\left(16 + 9\sin^2(2\pi t/p)\right),
\]
depending on the same condition.
If $1/2\leq x\leq 1$, then
\begin{align*}
\sum_{2\leq \abs{j}<p/2} \frac{1}{(j+x)^2}
&\leq
\frac{1}{(2-x)^2} + \frac{1}{(3-x)^2} + \sum_{j\geq3} \left(\frac{1}{(j-(1-x))^2} + \frac{1}{(j+(1-x))^2}\right)\\
&\leq
\frac{5}{4} + \sum_{j\geq3} \left(\frac{1}{(j-1/2)^2} + \frac{1}{(j+1/2)^2}\right)
=
\pi^2 - \frac{7019}{900}.
\end{align*}
We then find that
\[
\abs{\widetilde{H}'_{p,\alpha}(0,x)}
\geq
\frac{8}{9}\sqrt{16 + 9\cos^2(2\pi t/p)} - \left(\pi^2 - \frac{7019}{900}\right)
\geq \frac{10219}{900}-\pi ^2 = 1.4848\ldots
\]
when $p\equiv1\bmod4$, and that the same bound holds when $p\equiv3\bmod4$.
If $0\leq x\leq 1/2$, then
\begin{align*}
\sum_{2\leq \abs{j}<p/2} \frac{1}{(j+x)^2}
&\leq
\sum_{j\geq 2} \left(\frac{1}{(j+x)^2} +  \frac{1}{(j-x)^2}\right)\\
&\leq
\sum_{j\geq 2}\left( \frac{1}{(j+1/2)^2} +  \frac{1}{(j-1/2)^2}\right)
= \pi^2 - \frac{76}{9},
\end{align*}
so
\[
\abs{\widetilde{H}'_{p,\alpha}(0,x)}
\geq
\begin{cases}
2\abs{\cos(2\pi t/p)} - \left(\pi^2 - \frac{76}{9}\right), & \textrm{if $\leg{-1}{p}=1$},\\[\thinsp]
2\abs{\sin(2\pi t/p)} - \left(\pi^2 - \frac{76}{9}\right), & \textrm{if $\leg{-1}{p}=-1$}.
\end{cases}
\]
Since $|\widetilde{H}'_{p,\alpha}(0,x)|>0$ across this interval only for particular ranges of $\alpha$, we turn again to the second derivative.
Here we compute in a similar way that
\[
\abs{\frac{\zeta_p^t}{(x+1)^3} + \leg{-1}{p}\frac{\zeta_p^{-t}}{(x-1)^3}}^2 = \frac{1}{(x+1)^6} + \frac{1}{(x-1)^6} + \leg{-1}{p}\frac{2\cos(4\pi t/p)}{(x^2-1)^3}
\]
and
\begin{align*}
\sum_{2\leq \abs{j}<p/2} \frac{1}{(j+x)^3}
&=
\frac{1}{(x-2)^3} + \frac{1}{(x-3)^3} + \sum_{2\leq j<p/2} \frac{1}{(j+x)^3} - \sum_{4\leq j<p/2} \frac{1}{(j-x)^3}\\
&\leq
-\frac{35}{216} + \sum_{j\geq2} \frac{1}{j^3}
=
\zeta(3) - \frac{251}{216}.
\end{align*}
It follows that
\[
\frac{1}{2}\abs{\widetilde{H}''_{p,\alpha}(0,x)}
\geq
\begin{cases}
2\abs{\sin(2\pi t/p)} - \left(\zeta(3) - \frac{251}{216}\right), & \textrm{if $\leg{-1}{p}=1$},\\[\thinsp]
2\abs{\cos(2\pi t/p)} - \left(\zeta(3) - \frac{251}{216}\right), & \textrm{if $\leg{-1}{p}=-1$}
\end{cases}
\]
for $0\leq x\leq1/2$,
so at least one of $|\widetilde{H}'_{p,\alpha}(0,x)|$ and $|\widetilde{H}''_{p,\alpha}(0,x)|$ exceeds $0.5498\ldots$ over this interval.

For $k=p-1$, we use
\begin{equation}\label{eqnHpm1}
\abs{\widetilde{H}'_{p,\alpha}(p-1,x)}
\geq
 \abs{\frac{1}{x^2} +\leg{-1}{p} \frac{\zeta_p^{-2t}}{(x-2)^2}} - \sum_{\substack{\abs{j}<p/2\\j\neq0,-1,-2}} \frac{1}{(j+x)^2}.
\end{equation}
The first term above is decreasing over $(0,1]$.
Its value at $x=1/2$ is either
\[
\frac{8}{9}\sqrt{16+9\cos^2(2\pi t/p)\vphantom{\sin^2}}
\quad\textrm{or}\quad
\frac{8}{9}\sqrt{16+9\sin^2(2\pi t/p)},
\]
depending on whether $\leg{-1}{p}=\pm1$.
At $x=1$, it is $2\abs{\cos(2\pi t/p)}$ or $2\abs{\sin(2\pi t/p)}$, again depending on $\leg{-1}{p}$.
The sum in \eqref{eqnHpm1} is
\[
\sum_{1\leq j<p/2} \frac{1}{(j+x)^2} + \sum_{3\leq j<p/2} \frac{1}{(j-x)^2} \leq
\begin{cases}
\frac{2\pi^2}{3} - \frac{40}{9}, & \textrm{if $0\leq x\leq 1/2$},\\[\thinsp]
\frac{2\pi^2}{3} - 5, & \textrm{if $1/2\leq x\leq 1$}.
\end{cases}
\]
Over $0\leq x\leq1/2$, it follows that $|\widetilde{H}'_{p,\alpha}(p-1,x)| \geq 8-2\pi^2/3 = 1.4202\ldots\,$.
On the remaining range, $|\widetilde{H}'_{p,\alpha}(p-1,x)| > 0$ only for certain $\alpha$, and we turn once more to the second derivative.
Since
\[
\frac{1}{2}\abs{\widetilde{H}''_{p,\alpha}(p-1,x)}
\geq
 \abs{\frac{1}{x^3} +\leg{-1}{p} \frac{\zeta_p^{-2t}}{(x-2)^3}} - \sum_{\substack{\abs{j}<p/2\\j\neq0,-1,-2}} \frac{1}{(j+x)^3},
\]
the first term is bounded below by $2\abs{\sin(2\pi t/p)}$ or $2\abs{\cos(2\pi t/p)}$ depending on $\leg{-1}{p}$, and since over $1/2\leq x\leq1$
\[
\sum_{\substack{\abs{j}<p/2\\j\neq0,-1,-2}} \frac{1}{(j+x)^3}
= \!\!\sum_{1\leq j<p/2} \frac{1}{(j+x)^3}
- \!\!\sum_{3\leq j<p/2} \frac{1}{(j-x)^3}
< \sum_{j\geq1} \frac{1}{(j+\frac{1}{2})^3}
= 7\zeta(3) - 8,
\]
we have
\[
\frac{1}{2} \abs{\widetilde{H}''_{p,\alpha}(p-1,x)} \geq
\begin{cases}
2\abs{\sin(2\pi t/p)} - \left(7\zeta(3) - 8\right), & \textrm{if $\leg{-1}{p}=1$},\\[\thinsp]
2\abs{\cos(2\pi t/p)} - \left(7\zeta(3) - 8\right), & \textrm{if $\leg{-1}{p}=-1$}
\end{cases}
\]
for $1/2\leq x\leq1$.
It follows that at least one of $|\widetilde{H}'_{p,\alpha}(p-1,x)|$ and $|\widetilde{H}''_{p,\alpha}(p-1,x)|$ exceeds $0.3339\ldots$ over $[1/2,1]$.

We may now complete the proof of \eqref{eqnsmallwtH}.
If $|\widetilde{H}_{p,\alpha}(k,x)|>\epsilon$ for $0\leq x\leq1$ there is nothing to show.
Since $\lim_{x\to0^+} |\widetilde{H}_{p,\alpha}(k,x)| = \lim_{x\to1^-} |\widetilde{H}_{p,\alpha}(k,x)| = \infty$, by taking $\epsilon$ sufficiently small we may assume there exist $0<x_1<x_2<x_3<1$ such that $|\widetilde{H}_{p,\alpha}(k,x_1)|=|\widetilde{H}_{p,\alpha}(k,x_3)|=\epsilon$ and $\widetilde{H}_{p,\alpha}(k,x_2)=0$.
It suffices to show that $\int_{x_2}^{x_3} \log |\widetilde{H}_{p,\alpha}(k,x)|\,dx = O\left(\epsilon^{0.24}\right)$.
With $\epsilon$ sufficiently small, we have $|\widetilde{H}'_{p,\alpha}(k,x)| > 0$ for all $x\in[x_2,x_3]$ and we may assume that
\[
\abs{\log |\widetilde{H}_{p,\alpha}(k,x)| \, \mathbbold{1}_{|\widetilde{H}_{p,\alpha}(k,x)|\leq\epsilon}} \leq |\widetilde{H}_{p,\alpha}(k,x)|^{-1/100}.
\]
By H\"older's inequality, we have
\begin{align*}
&\abs{\int_{x_2}^{x_3} \log |\widetilde{H}_{p,\alpha}(k,x)| \, \mathbbold{1}_{|\widetilde{H}_{p,\alpha}(k,x)|\leq\epsilon}\,dx}
\leq
\int_{x_2}^{x_3} |\widetilde{H}'_{p,\alpha}(k,x)|^{-1/4} \cdot \frac{|\widetilde{H}'_{p,\alpha}(k,x)|^{1/4}}{|\widetilde{H}_{p,\alpha}(k,x)|^{1/100}} \, dx\\
&\qquad \leq
\left(\int_{x_2}^{x_3} |\widetilde{H}'_{p,\alpha}(k,x)|^{-1/3} \, dx\right)^{3/4}
\left(\int_{x_2}^{x_3} \frac{|\widetilde{H}'_{p,\alpha}(k,x)|}{|\widetilde{H}_{p,\alpha}(k,x)|^{1/25}} \, dx\right)^{1/4}.
\end{align*}
The second term is $O(\epsilon^{0.24})$ by direct evaluation.
If $|\widetilde{H}'_{p,\alpha}(k,x)|\gg1$ over $[0,1]$ then the first term is $O(1)$.
Otherwise, if $\widetilde{H}'_{p,\alpha}(k,x)$ vanishes at some point $x_0$, then because
\[
\widetilde{H}'_{p,\alpha}(k,x) \approx (x-x_0) \widetilde{H}''_{p,\alpha}(k,x)
\]
for $x$ near $x_0$, and because $|\widetilde{H}''_{p,\alpha}(k,x)|\gg1$ over $[0,1]$ by the prior work, we have
\[
\int_{x_0}^x  |\widetilde{H}'_{p,\alpha}(k,u)|^{-1/3}\,du \ll \int_{x_0}^x \abs{u-x_0}^{-1/3}\,du \ll \abs{x-x_0}^{2/3} = O(1)
\]
and the result follows.

The companion result \eqref{eqnSmallH} follows immediately upon application of Lemma~\ref{lemHstar}.
For \eqref{eqnSmallHX}, by Lemma~\ref{lemGXalpha} the function $H_{\mathbb{X},\alpha}(x)$ is almost surely continuous on $(0,1)$, and the proof is essentially the same.
\end{proof}

\begin{lemma}\label{lemLargeValsAtEndsInt}
Let $\alpha\in[0,1]$, and let $0<\epsilon<1/2$.
For primes $p\to\infty$, we have
\begin{equation}\label{eqnLargeH}
\frac{1}{p} \sum_{k=0}^{p-1} \left(\int_0^\epsilon + \int_{1-\epsilon}^1\right) \log\abs{H_{p,\alpha}(k,x)} \mathbbold{1}_{\abs{H_{p,\alpha}(k,x)}\geq\epsilon}\,dx \ll -\epsilon \log \epsilon.
\end{equation}
In addition, 
\begin{equation*}
\left(\int_0^\epsilon + \int_{1-\epsilon}^1\right) \mathbb{E}\left(\log\abs{H_{\mathbb{X},\alpha}(x)} \mathbbold{1}_{\abs{H_{\mathbb{X},\alpha}(x)}\geq\epsilon}\right)\,dx \ll -\epsilon \log \epsilon.
\end{equation*}
\end{lemma}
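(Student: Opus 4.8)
The plan is to handle the two endpoint intervals separately. Since the behaviour of $H_{p,\alpha}(k,x)$ as $x\to1^-$ mirrors that as $x\to0^+$---with the singular contribution in Theorem~\ref{thmAbsTuryn} now coming from the $j=-1$ term rather than the $j=0$ term---I would carry out the argument in detail only for $\int_0^\epsilon$ and note that $\int_{1-\epsilon}^1$ is entirely analogous. Writing $\log=\log^+-\log^-$ with $\log^+u=\max(\log u,0)$ and $\log^-u=\max(-\log u,0)$, the $\log^-$ contribution is immediate: on the set where $\abs{H_{p,\alpha}(k,x)}\geq\epsilon$ one has $\log^-\abs{H_{p,\alpha}(k,x)}\leq\log(1/\epsilon)$, and this set meets $[0,\epsilon]$ in measure at most $\epsilon$, so its contribution is $O(\epsilon\log(1/\epsilon))=O(-\epsilon\log\epsilon)$ uniformly in $k$ and $\alpha$. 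The real work is to bound $\frac1p\sum_{k}\int_0^\epsilon\log^+\abs{H_{p,\alpha}(k,x)}\,dx$, where the indicator may be dropped since $\log^+$ vanishes unless $\abs{H_{p,\alpha}(k,x)}>1\geq\epsilon$.

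For this I would split off the singular part of $H_{p,\alpha}(k,x)$ near $x=0$. By Theorem~\ref{thmAbsTuryn} and \eqref{eqnHdefn}, writing $t=\round{\alpha p}$,
\[
H_{p,\alpha}(k,x)=\frac{2\pi i}{p}\leg{k}{p}\frac{1}{\zeta_p^{x}-1}+R_{p,\alpha}(k,x),
\qquad
R_{p,\alpha}(k,x)=\frac{2\pi i}{p}\sum_{0<\abs{j}<p/2}\leg{k-j}{p}\frac{\zeta_p^{jt}}{\zeta_p^{j+x}-1}.
\]
The leading term has modulus $\pi/(p\abs{\sin(\pi x/p)})$, which lies between absolute multiples of $1/x$ on $(0,\epsilon]$. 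Using $\log^+(a+b)\leq\log^+(2a)+\log^+(2b)$, its contribution to $\log^+\abs{H_{p,\alpha}(k,x)}$ is at most $\log^+(C/x)$ for an absolute constant $C$, and since $\int_0^\epsilon\log(C/x)\,dx=O(-\epsilon\log\epsilon)$ this part is acceptable and, being independent of $k$, survives the average over $k$. (When $k=0$ the leading term vanishes, so $H_{p,\alpha}(0,x)=R_{p,\alpha}(0,x)$ has no singularity and is absorbed into the remainder estimate below.)

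It then remains to bound $\frac1p\sum_{k}\int_0^\epsilon\log^+\bigl(2\abs{R_{p,\alpha}(k,x)}\bigr)\,dx$. Here I would invoke the elementary inequality $\log^+u\leq\tfrac12u^2$ to pass to a second moment, expand the square, and apply the Gauss evaluation $\sum_{k=0}^{p-1}\leg{(k-j)(k-j')}{p}=-1$ for $0<\abs{j-j'}<p$ exactly as in Lemma~\ref{lemEquicont}. The diagonal terms give $\ll\sum_{0<\abs{j}<p/2}(j+x)^{-2}\ll1$ while the off-diagonal terms contribute $\ll(\log p)^2/p$, so that $\frac1p\sum_k\abs{R_{p,\alpha}(k,x)}^2\ll1$ uniformly for $x\in[0,\epsilon]$, all large $p$, and all $\alpha$; integrating over $[0,\epsilon]$ yields $O(\epsilon)=O(-\epsilon\log\epsilon)$ and establishes \eqref{eqnLargeH}. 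The statement for $H_{\mathbb{X},\alpha}$ follows the identical scheme with $\frac1p\sum_k$ replaced by $\mathbb{E}$: the leading term is $\mathbb{X}(0)/x$ and the remainder $\sum_{j\neq0}e(\alpha j)\mathbb{X}(j)/(j+x)$ has $\mathbb{E}(\abs{\cdot}^2)=\sum_{j\neq0}(j+x)^{-2}\ll1$ by independence of the $\mathbb{X}(j)$. I expect the main obstacle to be the uniform second-moment bound on $R_{p,\alpha}$ as $x\to0^+$: the individual terms are only of size $1/\abs{j}$, so summability rests entirely on the cancellation furnished by Gauss's formula in the diagonal/off-diagonal split, and one must verify that the off-diagonal error genuinely tends to $0$ and not merely stays bounded.
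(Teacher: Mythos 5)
Your proof is correct, and it shares the paper's essential engine---Gauss's evaluation $\sum_{k=0}^{p-1}\leg{(k-j)(k-j')}{p}=-1$, which makes the mean square over $k$ of the relevant character sum $O(1)$---but the packaging is genuinely different. The paper tames the $1/x$ singularity by setting $L_{p,\alpha}(k,x)=xH_{p,\alpha}(k,x)$ and splitting the integral according to $\abs{L_{p,\alpha}}\leq1$ versus $\abs{L_{p,\alpha}}>1$; on the large set it writes $\log\abs{H}=\log\abs{L}-\log x$, bounds $\max_{x\in[0,1/2]}\abs{L_{p,\alpha}(k,x)}$ by $\abs{S_k}^2+C$ with $S_k=\sum_{0<\abs{j}<p/2}\leg{k-j}{p}\zeta_p^{jt}/j$ independent of $x$, and then moves the average over $k$ inside the logarithm via concavity (Jensen) before invoking Gauss. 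You instead subtract the singular $j=0$ term outright, split $\log=\log^+-\log^-$, dispose of $\log^-$ trivially using the indicator, and convert $\log^+$ of the remainder into a second moment via $\log^+u\leq\tfrac12u^2$, integrating the mean square directly. Your route is somewhat more elementary: it avoids both the $L=xH$ bookkeeping and the Jensen step, and the pointwise-in-$x$ second-moment bound replaces the paper's freezing of $x$ (replacing $1/(j+x)$ by $1/j$ up to $O(1)$). The inequality $\log^+u\leq\tfrac12u^2$ is lossy in principle, but harmless here since the mean square is uniformly bounded; the paper's Jensen argument would matter only if one wanted logarithmic rather than polynomial control of a large second moment, which is not needed. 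One small remark: your closing worry about the off-diagonal term is unnecessary---boundedness (not decay) of $\frac1p\sum_k\abs{R_{p,\alpha}(k,x)}^2$ already suffices, since the final factor of $\epsilon$ comes from the length of the interval of integration, and in any case $(\log p)^2/p\to0$.
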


\begin{proof}
We consider the average over the integrals for $x\in[0,\epsilon]$ in \eqref{eqnLargeH}, the other cases are similar.
Let $L_{p,\alpha}(k,x) := x H_{p,\alpha}(k,x)$ so that
\begin{equation}\label{eqnHL}
\begin{split}
&\int_0^\epsilon \log\abs{H_{p,\alpha}(k,x)} \mathbbold{1}_{\abs{H_{p,\alpha}(k,x)}\geq\epsilon}\,dx =\\
&\quad
\int_0^\epsilon \log\abs{H_{p,\alpha}(k,x)} \mathbbold{1}_{\abs{H_{p,\alpha}(k,x)}\geq\epsilon} \mathbbold{1}_{\abs{L_{p,\alpha}(k,x)}>1}\,dx\\
&\qquad +
\int_0^\epsilon \log\abs{H_{p,\alpha}(k,x)} \mathbbold{1}_{\abs{H_{p,\alpha}(k,x)}\geq\epsilon} \mathbbold{1}_{\abs{L_{p,\alpha}(k,x)}\leq1}\,dx.
\end{split}
\end{equation}
One easily establishes that the second integral is $\ll -\epsilon\log \epsilon$, since
\begin{equation}\label{eqnHL2}
\begin{split}
\epsilon \log\epsilon
&\leq
\int_0^\epsilon \log\abs{H_{p,\alpha}(k,x)} \mathbbold{1}_{\abs{H_{p,\alpha}(k,x)}\geq\epsilon} \mathbbold{1}_{\abs{L_{p,\alpha}(k,x)}\leq1}\,dx\\
&\leq
\int_0^\epsilon \log\abs{L_{p,\alpha}(k,x)} \mathbbold{1}_{\abs{H_{p,\alpha}(k,x)}\geq\epsilon} \mathbbold{1}_{\abs{L_{p,\alpha}(k,x)}\leq1}\,dx + \int_0^\epsilon \abs{\log x}\,dx\\
&\leq
-c \epsilon\log \epsilon,
\end{split}
\end{equation}
for some positive constant $c$.
For the first integral, since $\abs{L_{p,\alpha}(k,x)}\leq\abs{H_{p,\alpha}(k,x)}$ for $x\in[0,1]$, we have
\begin{equation}\label{eqnHL1}
\begin{split}
&\int_0^\epsilon \log\abs{H_{p,\alpha}(k,x)} \mathbbold{1}_{\abs{H_{p,\alpha}(k,x)}\geq\epsilon} \mathbbold{1}_{\abs{L_{p,\alpha}(k,x)}>1}\,dx\\
&\qquad =
\int_0^\epsilon \log\abs{H_{p,\alpha}(k,x)} \mathbbold{1}_{\abs{L_{p,\alpha}(k,x)}>1}\,dx\\
&\qquad =
\int_0^\epsilon \log\abs{L_{p,\alpha}(k,x)} \mathbbold{1}_{\abs{L_{p,\alpha}(k,x)}>1}\,dx
+ \epsilon - \epsilon\log \epsilon.
\end{split}
\end{equation}
Next, writing $t$ for $\round{\alpha p}$ again, we have
\[
L_{p,\alpha}(k,x)
= x\hspace{-1ex}\sum_{\abs{j}<p/2} \leg{k-j}{p} \frac{\zeta_p^{jt}}{j+x} + O(1)
= x\hspace{-2ex}\sum_{0<\abs{j}<p/2} \leg{k-j}{p} \frac{\zeta_p^{jt}}{j} + O(1)
\]
uniformly for $0\leq x\leq 1/2$, so
\[
\max_{x\in[0,1/2]} \abs{L_{p,\alpha}(k,x)}
\leq
\abs{\sum_{0<\abs{j}<p/2}\leg{k-j}{p} \frac{\zeta_p^{jt}}{j}}^2 + C
\]
for some constant $C>1$.
Thus
\begin{equation}\label{eqnFirstIntL}
\begin{split}
0
&\leq
\int_0^\epsilon \log\abs{L_{p,\alpha}(k,x)} \mathbbold{1}_{\abs{L_{p,\alpha}(k,x)}>1}\,dx\\
&\leq
\epsilon\log\Biggl(\Big|\sum_{0<\abs{j}<p/2}\leg{k-j}{p} \frac{\zeta_p^{jt}}{j}\Big|^2 + C\Biggr).
\end{split}
\end{equation}
Using the concavity of the logarithm, we have
\[
\frac{1}{p} \sum_{k=0}^{p-1} \log\Biggl(\Big|\hspace{-1ex}\sum_{0<\abs{j}<p/2}\leg{k-j}{p} \frac{\zeta_p^{jt}}{j}\Big|^2 + C\Biggr)
\leq
\log\Biggl(\frac{1}{p} \sum_{k=0}^{p-1} \Big|\hspace{-1ex}\sum_{0<\abs{j}<p/2}\leg{k-j}{p} \frac{\zeta_p^{jt}}{j}\Big|^2 + C\Biggr),
\]
and
\begin{align*}
\frac{1}{p} \sum_{k=0}^{p-1} \Big|\hspace{-1ex}\sum_{0<\abs{j}<p/2}\leg{k-j}{p} \frac{\zeta_p^{jt}}{j}\Big|^2
&=
\sum_{\substack{0<\abs{j_1}<p/2\\0<\abs{j_2}<p/2}}
\frac{\zeta_p^{(j_1-j_2)t}}{p j_1 j_2} \sum_{k=0}^{p-1}
\leg{k-j_1}{p}\leg{k-j_2}{p}\\
&\leq
\frac{p-1}{p} \hspace{-1ex} \sum_{0<\abs{j}<p/2} \frac{1}{j^2} + \frac{1}{p} \sum_{\substack{0<\abs{j_1}<p/2\\0<\abs{j_2}<p/2\\j_1\neq j_2}} \frac{1}{\abs{j_1 j_2}}\\
&\ll
1 + \frac{(\log p)^2}{p} \ll 1,
\end{align*}
where we have again employed the result of Gauss that $\sum_{k=0}^{p-1} \leg{k-j_1}{p}\leg{k-j_2}{p}=-1$ for $p\geq3$ when $p\nmid(j_1-j_2)$.
Combining this with \eqref{eqnHL}, \eqref{eqnHL2}, \eqref{eqnHL1}, and \eqref{eqnFirstIntL} yields the result.
\end{proof}

To complete the proof of part~\ref{partMeasure}, we combine \eqref{eqnHmost} with Lemmas~\ref{lemSmallValsInt} and~\ref{lemLargeValsAtEndsInt} to obtain
\[
\lim_{p\to\infty} \frac{1}{p} \sum_{k=0}^{p-1} \int_0^1 \log \abs{H_{p,\alpha}(k,x)} \, dx
=
\int_0^1 \mathbb{E}\left(\log\abs{H_{\mathbb{X},\alpha}(x)}\right)\,dx + O\left(\epsilon^{0.24}\right)
\]
for sufficiently small $\epsilon>0$, and then let $\epsilon\to0^+$.
We then account for the additional term $\log(2\pi)$ from \eqref{eqnHdefn} and employ \eqref{eqnTurynToInterp} to obtain the statement.

\section{Proof of Corollary~\ref{corTuryn}}\label{secProofCor}

For parts~\ref{partPhiLW} and \ref{partLqLW}, we turn to the functions $G_{p,t}^{\pm}(k,x)$ from \eqref{eqnGptpm}.
Using \eqref{eqnTurynz}, \eqref{eqnGpt}, and \eqref{eqnFptpm}, we compute
\[
G_{p,t}^{\pm}(k,x) = G_{p,t}(k,x) \pm \frac{e(x(1-t/p))}{a(p)\sqrt{p}}.
\]
Note that the last term is independent of $k$.
Let $\alpha\in[0,1]$ be a fixed real number, and suppose $\varphi : C[0,1] \to \mathbb{C}$ is bounded and continuous.
Since $e(x(1-\alpha))/(a(p)\sqrt{p}) \to 0$ uniformly in $x$ as $p\to\infty$ and $\varphi$ is continuous, it follows that
\[
\lim_{p\to\infty} \frac{1}{p} \sum_{k=0}^{p-1} \varphi\left(G_{p,\round{\alpha p}}^{\pm}(k, x)\right) =
\lim_{p\to\infty} \frac{1}{p} \sum_{k=0}^{p-1} \varphi\left(G_{p,\round{\alpha p}}(k, x)\right)
= \mathbb{E}(\varphi(G_{\mathbb{X},\alpha}(x))).
\]
The conclusions of parts \ref{partPhi} and \ref{partLq} of Theorem~\ref{thmTuryn} therefore hold just as well for the functions $G_{p,\round{\alpha p}}^{\pm}(k,x)$ and the Littlewood polynomials $F_{p,\round{\alpha p}}^{\pm}(x)$ respectively.

For part \ref{partMeasureLW}, we define $H_{p,\alpha}^{\pm}(k,x)$ in an analogous way with respect to \eqref{eqnHdefn}, so that (with $t=\round{\alpha p}$)
\[
H_{p,\alpha}^{\pm}(k,x)
= H_{p,\alpha}(k,x) \pm \frac{\pi e\left(x\left(\frac{1}{2}-\frac{t}{p}\right)\right)}{a(p)\sin(\pi x) \sqrt{p}}
= H_{p,\alpha}(k,x) \pm \frac{\pi(\cot(\pi x)+i)}{a(p)\zeta_p^{tx}\sqrt{p}}.
\]
It follows that
\[
\frac{d}{dx}\left(H_{p,\alpha}^{\pm}(k,x) - H_{p,\alpha}(k,x)\right) = O\left(\frac{1}{x^2\sqrt{p}}\right)
\]
and
\[
\frac{d^2}{dx^2}\left(H_{p,\alpha}^{\pm}(k,x) - H_{p,\alpha}(k,x)\right) = O\left(\frac{1}{x^3\sqrt{p}}\right)
\]
for $0\leq k<p$.
Each of these is a factor of $\sqrt{p}$ smaller than any term in the series employed to estimate the first and second derivatives of $H_{p,\alpha}(k,x)$ in the proof of Lemma~\ref{lemSmallValsInt}, so for sufficiently large $p$ the additional term in $H_{p,\alpha}^{\pm}(k,x)$ does not disturb the estimates in Section~\ref{subsecPartMeasure} or the results obtained there.

\section{Calculations}\label{secCalculations}

\subsection{Mahler measure}\label{subsecMahlerMeas}
We use Theorem~\ref{thmTuryn}\ref{partMeasure} to estimate the asymptotic value of the normalized Mahler measure of the Turyn polynomials.
Since
\[
\int_0^1 \log\abs{\frac{e(x)-1}{2\pi i}}\,dx
= -\log(2\pi),
\]
we need to investigate the value of
\begin{align}\label{eqnLambda0}
\lambda_0(\alpha)
&= \lim_{J\to\infty} \frac{1}{2^{2J+1}} \sum_{\substack{\delta_j=\pm1\\\abs{j}\leq J}} \int_0^1 \log \Biggl\lvert \sum_{\abs{j}\leq J} \frac{\delta_j e(\alpha j)}{j+x} \Biggr\rvert \, dx\notag\\
&= \lim_{J\to\infty} \frac{1}{4^J} \sum_{\substack{\delta_j=\pm1\\0<\abs{j}\leq J}} \int_0^1 \log \Biggl\lvert \frac{1}{x} + \sum_{0<\abs{j}\leq J} \frac{\delta_j e(\alpha j)}{j+x} \Biggr\rvert \, dx,
\end{align}
since then
\[
\kappa_0(\alpha) = \frac{e^{\lambda_0(\alpha)}}{2\pi}.
\]
Let $\lambda_0^J(\alpha)$ denote the expression inside the limit in \eqref{eqnLambda0}, and let $\kappa_0^J(\alpha) = e^{\lambda_0^J(\alpha)}/2\pi$.
Since
\[
\frac{\delta_j e(\alpha j)}{x+j} + \frac{\delta_{-j} e(-\alpha j)}{x-j} = \frac{(\delta_je(\alpha j)+\delta_{-j}e(-\alpha j))x-(\delta_je(\alpha j)-\delta_{-j}e(-\alpha j))j}{x^2-j^2},
\]
we have
\[
\lambda_0^J(\alpha) = \frac{1}{4^J} \hspace{-2ex} \sum_{\substack{1\leq j\leq J\\\beta_{j,\alpha}(x)\in B_{j,\alpha,x}}} \hspace{-2ex} \int_0^1 \log \Biggl\lvert \frac{1}{x} + \sum_{j=1}^J \frac{2\beta_{j,\alpha}(x)}{x^2-j^2} \Biggr\rvert \, dx
\]
where
\[
B_{j,\alpha,x} = \{\pm(x\cos(2\pi \alpha j) - i j\sin(2\pi \alpha j)), \pm(i x\sin(2\pi \alpha j) - j\cos(2\pi \alpha j))\}.
\]
It follows that
\[
\lambda_0^J(\alpha) = \lambda_0^J\left(\frac{1}{2}\pm \alpha\right),
\]
so we may assume $0\leq \alpha \leq 1/4$.
In addition, we compute that
\[
\int_0^1 \biggl(\log x + \sum_{j=1}^J \log (j^2-x^2)\biggr) \, dx = J\log J + (J+1)\log(J+1) - 2J - 1,
\]
so
\begin{equation}\label{eqnTurynComputeAlpha}
\begin{split}
\lambda_0^J(\alpha)
&= 4^{-J} \hspace{-4ex} \sum_{\substack{1\leq j\leq J\\\beta_{j,\alpha}(x)\in B_{j,\alpha,x}}} \hspace{-2ex} \int_0^1 \log \biggl\lvert \prod_{\ell=1}^J (x^2-\ell^2) + 2x\beta_{j,\alpha}(x)\prod_{\substack{1\leq \ell\leq J\\\ell\neq j}} (x^2-\ell^2) \biggr\rvert \, dx\\
&\qquad - J\log J - (J+1)\log(J+1) + 2J + 1.
\end{split}
\end{equation}
We employed \eqref{eqnTurynComputeAlpha} to compute $\lambda_0^J(\alpha)$, and hence $\kappa_0^J(\alpha)$, with $\alpha = a/1600$ for $0\leq a\leq 400$, and with $J\leq 14$, using Julia
\cite{Julia} to perform the computations.
Figure~\ref{figAll14} shows plots for these $\kappa_0^J(\alpha)$ for $J=5$, $8$, and $14$.
These are distinguishable at the displayed scale only when $\alpha$ is rather small, but for fixed $\alpha$ the values increase in $J$, so the $J=5$ case is the lowest plot and $J=14$ is the highest.
Figure~\ref{figGain14} exhibits the values of $\kappa_0^{14}(\alpha)-\kappa_0^{13}(\alpha)$ at each sampled value $\alpha$, to show the size of these increases.

\begin{figure}[tbhp]
\begin{center}
\includegraphics[width=\figwidth]{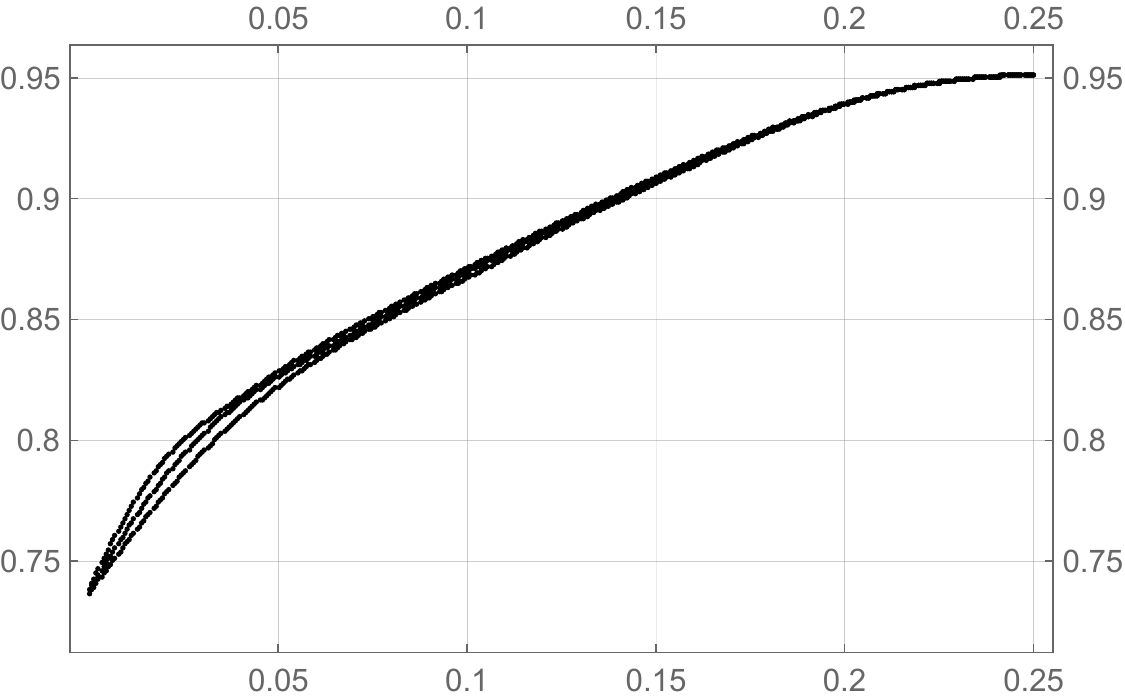}
\end{center}
\caption{$\kappa_0^5(\alpha)$ (lowest), $\kappa_0^8(\alpha)$, and $\kappa_0^{14}(\alpha)$ (highest) for $0\leq\alpha\leq1/4$.}\label{figAll14}
\end{figure}

\begin{figure}[tbhp]
\begin{center}
\includegraphics[width=\figwidth]{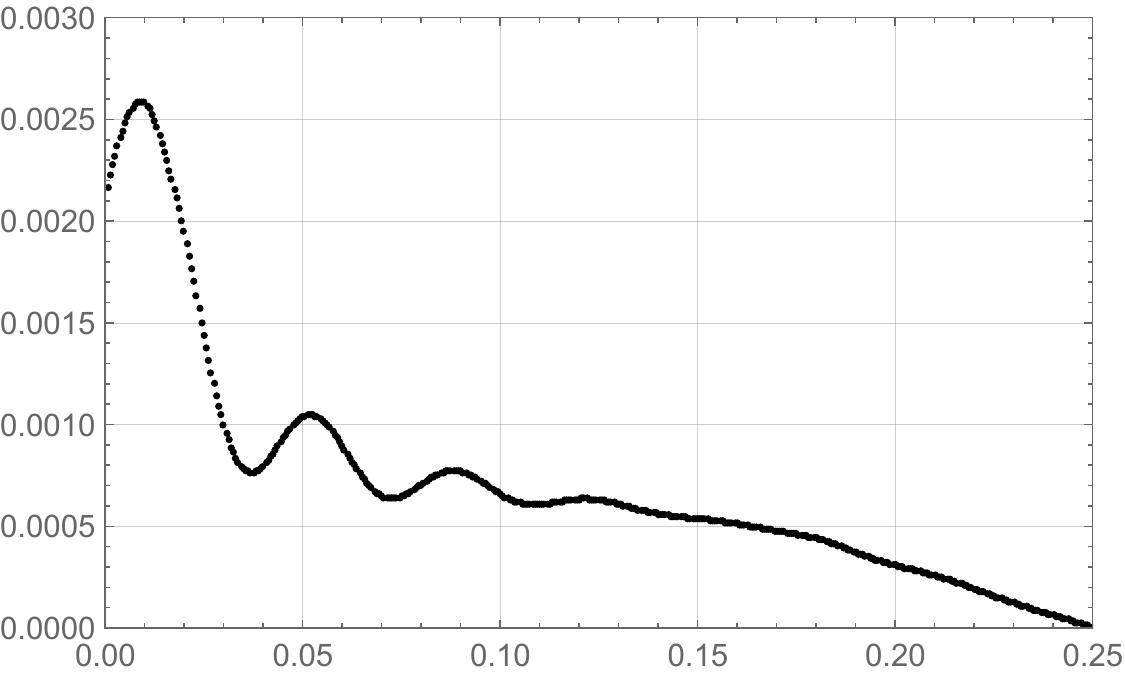}
\end{center}
\caption{$\kappa_0^{14}(\alpha)-\kappa_0^{13}(\alpha)$ for $0\leq\alpha\leq 1/4$.}\label{figGain14}
\end{figure}

The data suggests that $\kappa_0(\alpha)$ is increasing for $0\leq\alpha\leq1/4$, and motivates further study for the case $\alpha=1/4$.
Here, the allowable values $\beta_{j,1/4}(x)$ are simpler: 
\[
B_{2j,1/4,x} = \{\pm x, \pm j\}, \quad
B_{2j+1,1/4,x} = \{\pm i x, \pm i j\}.
\]
We computed $\kappa_0^J(1/4)$ for $J\leq 18$, and list the results in Table~\ref{tableMeasLqData}.
The same table displays the values we determined for $\kappa_0^J(0)$ (so the case of Fekete polynomials), again for $J\leq 18$.
Note in this case $B_{j,0,x} = \{\pm x, \pm j\}$.

The values $\kappa_0^J(0)$ and $\kappa_0^J(1/4)$ from Table~\ref{tableMeasLqData} are plotted in Figures~\ref{figFekByJ} and~\ref{figTurynByJ} respectively, where these values are shown as solid disks for $J\leq18$.
In each case we also plot the best fitting curve of the form $r+s/J+t/J^2$ over $7\leq J\leq18$.
For the $\alpha=0$ and $\alpha=1/4$ cases these curves are respectively
\[
0.73990 - \frac{0.01502}{J} - \frac{0.00453}{J^2},
\quad
0.95114 - \frac{0.000926}{J} + \frac{0.00272}{J^2}.
\]
While calculating a precise value for $\kappa_0^J(0)$ or $\kappa_0^J(1/4)$ for $J>18$ was not practical, we estimated the values for some larger $J$ by computing a sizable random sample of the $4^J$ integrals.
For each $19\leq J\leq 40$, we computed $2^{28}$ randomly selected integrals from the expression in \eqref{eqnTurynComputeAlpha}, and computed their mean value.
We show the resulting data points in Figures~\ref{figFekByJ} and~\ref{figTurynByJ} respectively using $\times$ symbols.

Thus, it appears that $\kappa_0(0) = 0.740$ and $\kappa_0(1/4) = 0.951$, to three decimal places of precision.

\begin{figure}[tbhp]
\begin{center}
\includegraphics[width=\figwidth]{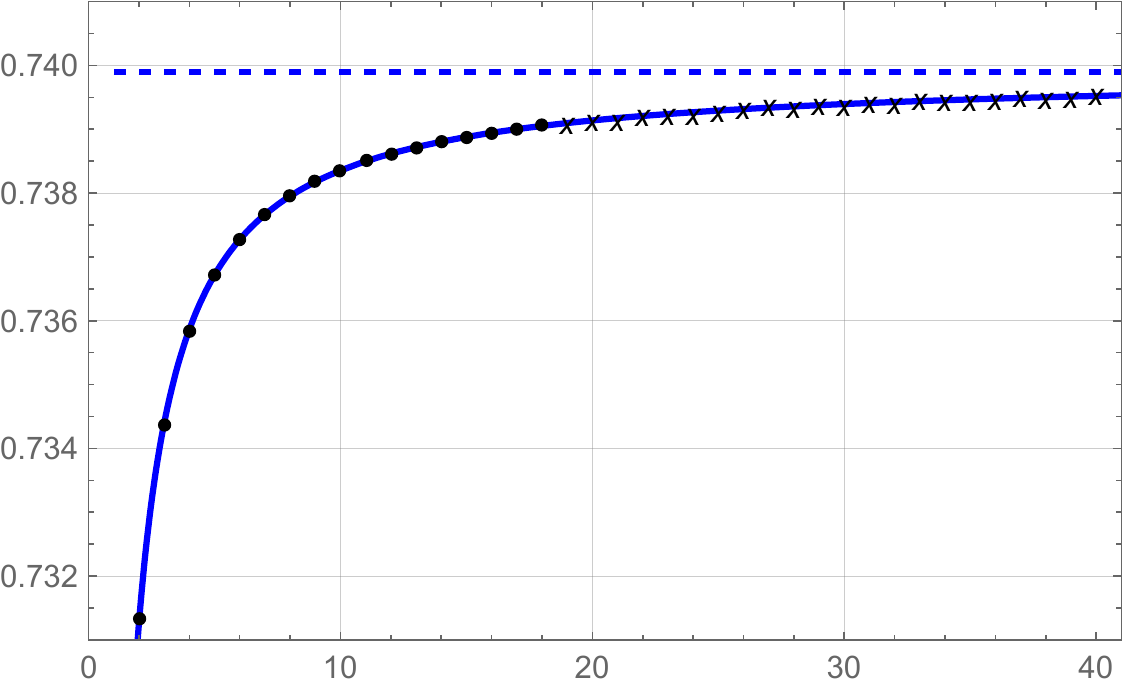}
\end{center}
\caption{$\kappa_0^J(0)$ for $J\leq18$ (solid points), their interpolating curve and its asymptotic value (solid curve and dashed line), and sampling estimates for $\kappa_0^J(0)$ for $19\leq J\leq40$ ($\times$ points).}\label{figFekByJ}
\end{figure}

\begin{figure}[tbhp]
\begin{center}
\includegraphics[width=\figwidth]{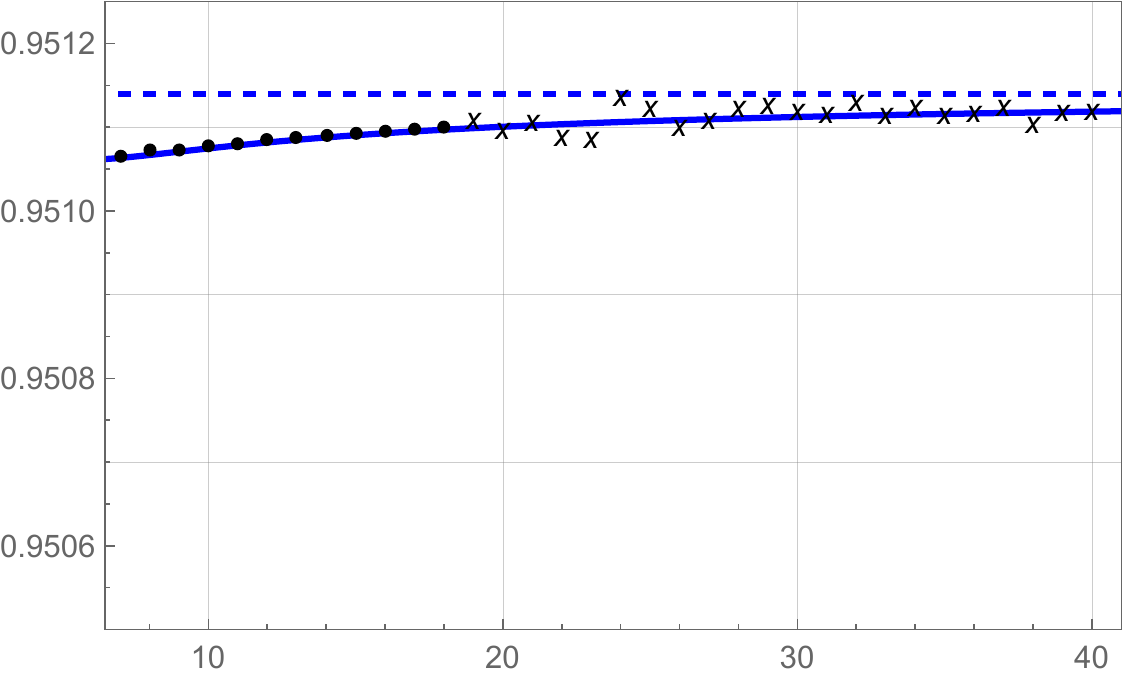}
\end{center}
\caption{$\kappa_0^J(1/4)$ for $J\leq18$ (solid points), their interpolating curve and its asymptotic value (solid curve and dashed line), and sampling estimates for $\kappa_0^J(1/4)$ for $19\leq J\leq40$ ($\times$ points).}\label{figTurynByJ}
\end{figure}

Finally, we present some calculations related to Corollary~\ref{corTuryn} regarding the companion Littlewood polynomials $F_{p.\round{t/4}}^{\pm}$ constructed from the Turyn polynomials with relative shift $\alpha=1/4$.
For each prime $p<2000$, we compute the normalized Mahler measure of this Turyn polynomial and each of its two companion Littlewood polynomials, using $\sqrt{p}$ as the normalizing factor for all three polynomials.
At each prime we record the two values
\begin{equation}\label{eqnMeasTLW}
\frac{1}{\sqrt{p}}\left(M(F_{p,\round{t/4}}^+) - M(F_{p,\round{t/4}})\right), \quad
\frac{1}{\sqrt{p}}\left(M(F_{p,\round{t/4}}^-) - M(F_{p,\round{t/4}})\right).
\end{equation}
These numbers show reasonably good convergence toward $0$ as $p$ grows large, as expected from the corollary, and are displayed in Figure~\ref{figTurynToLW}.

\begin{figure}[tbhp]
\begin{center}
\includegraphics[width=\figwidth]{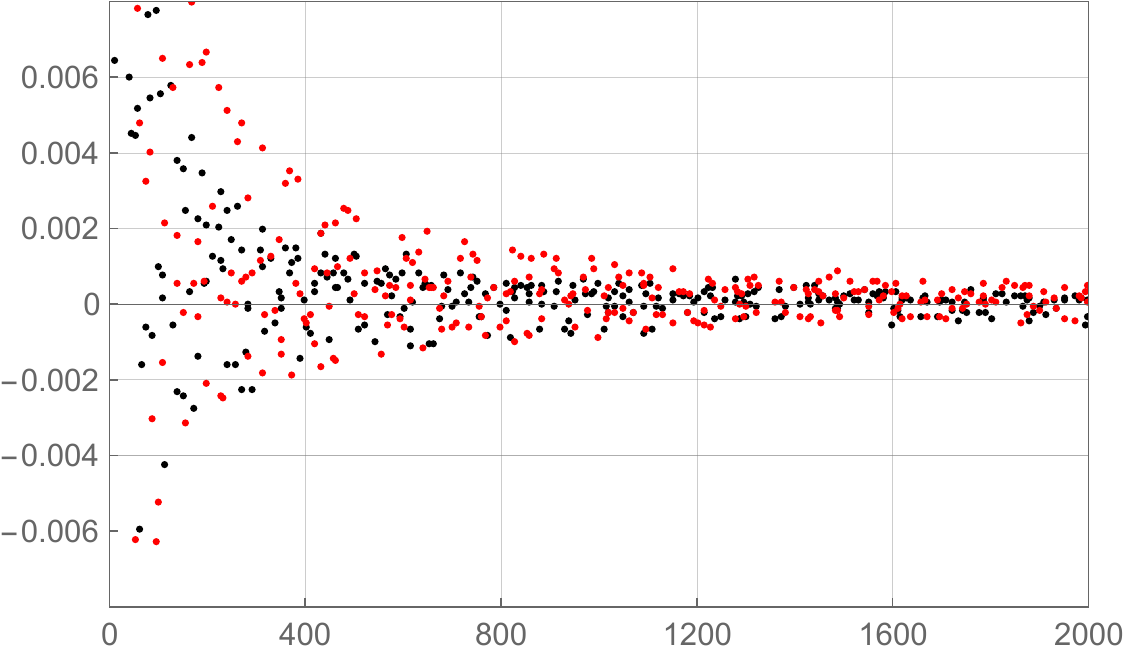}
\end{center}
\caption{Difference between the normalized Mahler measure of the Turyn polynomial with relative shift $\alpha=1/4$, and its two companion Littlewood polynomials from Corollary~\ref{corTuryn}, for primes $p<2000$, using \eqref{eqnMeasTLW}.
The black points show the result using $F_{p,\round{p/4}}^+(x)$; the red ones illustrate those from $F_{p,\round{p/4}}^-(x)$.}\label{figTurynToLW}
\end{figure}

\subsection{$L_q$ norms}\label{subsecLq}

We can estimate the normalized $L_q$ norms of the Fekete and Turyn polynomials in the same way.
For $q\neq0$, let
\[
\lambda_q^J(\alpha) = \frac{1}{4^J} \hspace{-2ex} \sum_{\substack{1\leq j\leq J\\\beta_{j,\alpha}(x)\in B_{j,\alpha,x}}} \hspace{-2ex} \int_0^1 \Biggl\lvert \sin(\pi x) \Biggl(\frac{1}{x} + \sum_{j=1}^J \frac{2\beta_{j,\alpha}(x)}{x^2-j^2}\Biggr) \Biggr\rvert^q \, dx,
\]
so that our estimate for the normalized $L_q$ norm for the Turyn polynomials with shift value $\alpha$, relative to the parameter $J$, is
\[
\kappa_q^J(\alpha) = \frac{\left(\lambda_q^J(\alpha)\right)^{1/q}}{\pi}.
\]
Motivated by Newman's problem, we use this strategy with $J\leq18$ to estimate the normalized $L_1$ norm of the Turyn polynomials with shift $\alpha=1/4$.
For symmetry we computed values for the $q=3$ case in the same range.
Our results are displayed in Table~\ref{tableMeasLqData}.
The best fits for the data in this table with $5\leq J\leq18$ to curves of the form $r+s/J+t/J^2$ are given by
\begin{equation*}\label{eqnFitTurL1L3}
\begin{split}
\kappa_1^J(1/4) &\approx 0.97757 - \frac{0.02849}{J} + \frac{0.01132}{J^2},\\
\kappa_3^J(1/4) &\approx 1.02022 - \frac{0.06969}{J} + \frac{0.02377}{J^2},
\end{split}
\end{equation*}
and the former provides our estimate \eqref{eqnTurynL1}.
In the same way, the best fits for the normalized $L_1$ and $L_3$ norms of the Fekete polynomials are
\begin{equation*}\label{eqnFitFekL1L3}
\begin{split}
\kappa_1^J(0) &\approx 0.90482 - \frac{0.02962}{J} + \frac{0.00896}{J^2},\\
\kappa_3^J(0) &\approx 1.07285 - \frac{0.07444}{J} + \frac{0.02548}{J^2}.
\end{split}
\end{equation*}

\begin{table}[tbhp]\small
\caption{Estimating the normalized Mahler measure, $L_1$ norm, and $L_3$ norm of the Fekete polynomials  ($\alpha=0$) and the Turyn polynomials with $\alpha=1/4$.}\label{tableMeasLqData}
\begin{tabular}{|c|cc|cc|cc|}\hline
\TS\BS$J$ & $\kappa_0^J(0)$ & $\kappa_0^J(1/4)$ & $\kappa_1^J(0)$ & $\kappa_1^J(1/4)$ & $\kappa_3^J(0)$ & $\kappa_3^J(1/4)$\\\hline
$1$ & $0.72251765$ & $0.95073546$ & $0.88137013$ & $0.95679218$ & $1.01651442$ & $0.96780395$\\
$2$ & $0.73134619$ & $0.95138014$ & $0.89194482$ & $0.96594791$ & $1.04123765$ & $0.99048801$\\
$3$ & $0.73437217$ & $0.95105638$ & $0.89588907$ & $0.96925694$ & $1.05072786$ & $0.99951092$\\
$4$ & $0.73584586$ & $0.95110175$ & $0.89796873$ & $0.97115394$ & $1.05580391$ & $1.00424633$\\
$5$ & $0.73671019$ & $0.95106467$ & $0.89925856$ & $0.97231756$ & $1.05897632$ & $1.00723295$\\
$6$ & $0.73727369$ & $0.95107405$ & $0.90013817$ & $0.97313514$ & $1.06115034$ & $1.00926538$\\
$7$ & $0.73766376$ & $0.95106484$ & $0.90077701$ & $0.97372578$ & $1.06273445$ & $1.01075255$\\
$8$ & $0.73795637$ & $0.95107230$ & $0.90126252$ & $0.97418145$ & $1.06394060$ & $1.01188075$\\
$9$ & $0.73817947$ & $0.95107306$ & $0.90164404$ & $0.97453830$ & $1.06488988$ & $1.01277084$\\
$10$ & $0.73835574$ & $0.95107808$ & $0.90195181$ & $0.97482882$ & $1.06565657$ & $1.01348812$\\
$11$ & $0.73849913$ & $0.95108034$ & $0.90220539$ & $0.97506762$ & $1.06628881$ & $1.01408054$\\
$12$ & $0.73861926$ & $0.95108454$ & $0.90241795$ & $0.97526901$ & $1.06681916$ & $1.01457672$\\
$13$ & $0.73872036$ & $0.95108715$ & $0.90259872$ & $0.97543999$ & $1.06727042$ & $1.01499939$\\
$14$ & $0.73880677$ & $0.95109059$ & $0.90275433$ & $0.97558782$ & $1.06765908$ & $1.01536301$\\
$15$ & $0.73888148$ & $0.95109306$ & $0.90288970$ & $0.97571625$ & $1.06799733$ & $1.01567974$\\
$16$ & $0.73894672$ & $0.95109592$ & $0.90300854$ & $0.97582938$ & $1.06829438$ & $1.01595766$\\
$17$ & $0.73900419$ & $0.95109813$ & $0.90311371$ & $0.97592939$ & $1.06855734$ & $1.01620383$\\
$18$ & $0.73905520$ & $0.95110053$ & $0.90320744$ & $0.97601875$ & $1.06879176$ & $1.01642314$\\\hline
\end{tabular}
\end{table}

\section{Generalized Turyn polynomials}\label{secFuture}

We close with a remark concerning potential future research.
A further generalization of the Fekete polynomials arises in some prior work on $L_4$ norms and Golay's merit factor problem from Section~\ref{secIntroduction}.
In the \textit{generalized Turyn polynomials} $F_{p,t,d}(x)$, in addition to the rotation parameter $t$, one introduces a second integer parameter $d$ that adjusts the degree of the resulting polynomial, causing truncation when $0\leq d<p$, and periodic extension when $d\geq p$:
\[
F_{p,t,d}(x) = \sum_{j=0}^{d} \leg{j + t}{p} x^j.
\]
Following computational studies by A.~Kirilusha and G.~Narayanaswamy in 1999, and then by Borwein, Choi, and Jedwab in 2004 \cite{BCJ04}, in 2013 Jedwab, Katz, and Schmidt \cite{JKS13} proved that normalized $L_4$ norms smaller than the value $(7/6)^{1/4}=1.039289\ldots$ achieved by $F_{p,\round{p/4}}$ are attained by certain generalized Turyn polynomials.
They showed that the optimal value for this family, $1.037282\ldots<(22/19)^{1/4}$, occurs when $t\approx0.221p$ and $d\approx 1.058p$.
It seems likely that these same polynomials would also possess large normalized Mahler measure, potentially larger than the $0.951$ value attained by the Turyn polynomials studied here with $t=\round{p/4}$.
Adapting the methods of the present paper to this family would introduce additional complications, such as incomplete character sums.

\section*{Acknowledgments}

I thank Stephen Choi, Oleksiy Klurman, Youness Lamzouri, and Marc Munsch for beneficial conversations and correspondence, as well as helpful remarks on a prior version of this manuscript.
In particular, I thank Oleksiy Klurman for indicating the method of the proof of Corollary~\ref{corTuryn}.
I also thank the referee for their careful reading and helpful comments.

\bibliographystyle{amsplain}

\begin{bibdiv}
\begin{biblist}

\bib{BN73}{article}{
   author={Beller, E.},
   author={Newman, D. J.},
   title={An extremal problem for the geometric mean of polynomials},
   journal={Proc. Amer. Math. Soc.},
   volume={39},
   date={1973},
   pages={313--317},
   issn={0002-9939},
   review={\MR{316686}},
   doi={10.2307/2039638},
}

\bib{Julia}{article}{
    author={Bezanson, Jeff},
    author={Edelman, Alan},
    author={Shah, Viral B.},
    title={Julia: A fresh approach to numerical computing},
    journal={SIAM Review},
    volume={59},
    number={1},
    pages={65--98},
    year={2017},
    publisher={SIAM},
    doi={10.1137/141000671},
    url={https://epubs.siam.org/doi/10.1137/141000671}
}

\bib{Borwein02}{book}{
   author={Borwein, Peter},
   title={Computational excursions in analysis and number theory},
   series={CMS Books in Mathematics/Ouvrages de Math\'{e}matiques de la SMC},
   volume={10},
   publisher={Springer-Verlag, New York},
   date={2002},
   pages={x+220},
   isbn={0-387-95444-9},
   review={\MR{1912495}},
   doi={10.1007/978-0-387-21652-2},
}

\bib{BC02}{article}{
   author={Borwein, Peter},
   author={Choi, Kwok-Kwong Stephen},
   title={Explicit merit factor formulae for Fekete and Turyn polynomials},
   journal={Trans. Amer. Math. Soc.},
   volume={354},
   date={2002},
   number={1},
   pages={219--234},
   issn={0002-9947},
   review={\MR{1859033}},
   doi={10.1090/S0002-9947-01-02859-8},
}

\bib{BCJ04}{article}{
   author={Borwein, Peter},
   author={Choi, Kwok-Kwong Stephen},
   author={Jedwab, Jonathan},
   title={Binary sequences with merit factor greater than 6.34},
   journal={IEEE Trans. Inform. Theory},
   volume={50},
   date={2004},
   number={12},
   pages={3234--3249},
   issn={0018-9448},
   review={\MR{2103494}},
   doi={10.1109/TIT.2004.838341},
}

\bib{BDM07}{article}{
   author={Borwein, Peter},
   author={Dobrowolski, Edward},
   author={Mossinghoff, Michael J.},
   title={Lehmer's problem for polynomials with odd coefficients},
   journal={Ann. of Math. (2)},
   volume={166},
   date={2007},
   number={2},
   pages={347--366},
   issn={0003-486X},
   review={\MR{2373144}},
   doi={10.4007/annals.2007.166.347},
}

\bib{BM08}{article}{
   author={Borwein, Peter},
   author={Mossinghoff, Michael J.},
   title={Barker sequences and flat polynomials},
   conference={
      title={Number theory and polynomials},
   },
   book={
      series={London Math. Soc. Lecture Note Ser.},
      volume={352},
      publisher={Cambridge Univ. Press, Cambridge},
   },
   date={2008},
   pages={71--88},
   review={\MR{2428516}},
   doi={10.1017/CBO9780511721274.007},
}

\bib{CE14}{article}{
   author={Choi, Stephen},
   author={Erd\'{e}lyi, Tam\'{a}s},
   title={Average Mahler's measure and $L_p$ norms of Littlewood polynomials},
   journal={Proc. Amer. Math. Soc. Ser. B},
   volume={1},
   date={2014},
   pages={105--120},
   review={\MR{3272724}},
   doi={10.1090/S2330-1511-2014-00013-4},
}

\bib{CE15}{article}{
   author={Choi, Stephen},
   author={Erd\'{e}lyi, Tam\'{a}s},
   title={Sums of monomials with large Mahler measure},
   journal={J. Approx. Theory},
   volume={197},
   date={2015},
   pages={49--61},
   issn={0021-9045},
   review={\MR{3351539}},
   doi={10.1016/j.jat.2014.01.003},
}

\bib{CM11}{article}{
   author={Choi, Kwok-Kwong Stephen},
   author={Mossinghoff, Michael J.},
   title={Average Mahler's measure and $L_p$ norms of unimodular polynomials},
   journal={Pacific J. Math.},
   volume={252},
   date={2011},
   number={1},
   pages={31--50},
   issn={0030-8730},
   review={\MR{2862140}},
   doi={10.2140/pjm.2011.252.31},
}

\bib{CGPS00}{article}{
   author={Conrey, B.},
   author={Granville, A.},
   author={Poonen, B.},
   author={Soundararajan, K.},
   title={Zeros of Fekete polynomials},
   journal={Ann. Inst. Fourier (Grenoble)},
   volume={50},
   date={2000},
   number={3},
   pages={865--889},
   issn={0373-0956},
   review={\MR{1779897}},
}

\bib{DM05}{article}{
   author={Dubickas, Art\={u}ras},
   author={Mossinghoff, Michael J.},
   title={Auxiliary polynomials for some problems regarding Mahler's
   measure},
   journal={Acta Arith.},
   volume={119},
   date={2005},
   number={1},
   pages={65--79},
   issn={0065-1036},
   review={\MR{2163518}},
   doi={10.4064/aa119-1-5},
}

\bib{Erdelyi18}{article}{
   author={Erd\'{e}lyi, Tam\'{a}s},
   title={Improved lower bound for the Mahler measure of the Fekete polynomials},
   journal={Constr. Approx.},
   volume={48},
   date={2018},
   number={2},
   pages={283--299},
   issn={0176-4276},
   review={\MR{3848040}},
   doi={10.1007/s00365-017-9398-y},
}

\bib{Erdelyi20}{article}{
   author={Erd\'{e}lyi, Tam\'{a}s},
   title={The asymptotic value of the Mahler measure of the Rudin-Shapiro polynomials},
   journal={J. Anal. Math.},
   volume={142},
   date={2020},
   number={2},
   pages={521--537},
   issn={0021-7670},
   review={\MR{4205789}},
   doi={10.1007/s11854-020-0142-3},
}

\bib{EL07}{article}{
   author={Erd\'{e}lyi, T.},
   author={Lubinsky, D. S.},
   title={Large sieve inequalities via subharmonic methods and the Mahler measure of the Fekete polynomials},
   journal={Canad. J. Math.},
   volume={59},
   date={2007},
   number={4},
   pages={730--741},
   issn={0008-414X},
   review={\MR{2338232}},
   doi={10.4153/CJM-2007-032-x},
}

\bib{Fielding70}{article}{
   author={Fielding, G. T.},
   title={The expected value of the integral around the unit circle of a certain class of polynomials},
   journal={Bull. London Math. Soc.},
   volume={2},
   date={1970},
   pages={301--306},
   issn={0024-6093},
   review={\MR{280689}},
   doi={10.1112/blms/2.3.301},
}

\bib{GIMPW10}{article}{
   author={Garza, J.},
   author={Ishak, M. I. M.},
   author={Mossinghoff, M. J.},
   author={Pinner, C. G.},
   author={Wiles, B.},
   title={Heights of roots of polynomials with odd coefficients},
   journal={J. Th\'{e}or. Nombres Bordeaux},
   volume={22},
   date={2010},
   number={2},
   pages={369--381},
   issn={1246-7405},
   review={\MR{2769068}},
}

\bib{Golay83}{article}{
   author={Golay, M.~J.~E.},
   title={The merit factor of Legendre sequences},
   journal={IEEE Trans. Inform. Theory},
   volume={29},
   date={1983},
   pages={934--936},
}

\bib{GS17}{article}{
   author={G\"{u}nther, Christian},
   author={Schmidt, Kai-Uwe},
   title={$L^q$ norms of Fekete and related polynomials},
   journal={Canad. J. Math.},
   volume={69},
   date={2017},
   number={4},
   pages={807--825},
   issn={0008-414X},
   review={\MR{3679696}},
   doi={10.4153/CJM-2016-023-4},
}

\bib{HJ88}{article}{
   author={H{\o}holdt, T.},
   author={Jensen, H.},
   title={Determination of the merit factor of Legendre sequences},
   journal={IEEE Trans. Inform. Theory},
   volume={34},
   date={1988},
   pages={161--164},
}

\bib{Jedwab05}{article}{
   author={Jedwab, Jonathan},
   title={A survey of the merit factor problem for binary sequences},
   book={
      title={Proceedings of Sequences and Their Applications},
      series={Lecture Notes in Comput. Sci.},
      volume={3486},
      publisher={Springer, New York},
   },
   date={2005},
   pages={30--55},
}

\bib{JKS13}{article}{
   author={Jedwab, Jonathan},
   author={Katz, Daniel J.},
   author={Schmidt, Kai-Uwe},
   title={Littlewood polynomials with small $L^4$ norm},
   journal={Adv. Math.},
   volume={241},
   date={2013},
   pages={127--136},
   issn={0001-8708},
   review={\MR{3053707}},
   doi={10.1016/j.aim.2013.03.015},
}

\bib{Katz18}{article}{
   author={Katz, Daniel J.},
   title={Sequences with low correlation},
   conference={
      title={Arithmetic of finite fields},
   },
   book={
      series={Lecture Notes in Comput. Sci.},
      volume={11321},
      publisher={Springer, Cham},
   },
   date={2018},
   pages={149--172},
   review={\MR{3905074}},
   doi={10.1007/978-3-030-05153-2\_8},
}

\bib{KLM23}{article}{
   author={Klurman, Oleksiy},
   author={Lamzouri, Youness},
   author={Munsch, Marc},
   note={arXiv:2306.07156 [math.NT], 12 Jun 2023},
   title={$L_q$ norms and Mahler measure of Fekete polynomials},
   pages={24 pp.},
}

\bib{Krylov02}{book}{
   author={Krylov, N.~V.},
   title={Introduction to the theory of random processes},
   series={Grad. Stud. Math.},
   volume={43},
   publisher={American Mathematical Society, Providence, RI},
   date={2002},
   pages={xii+230},
   isbn={0-8218-2985-8},
   review={\MR{1885884}},
   doi={10.1090/gsm/043},
}

\bib{Lehmer33}{article}{
   author={Lehmer, D.~H.},
   title={Factorization of certain cyclotomic functions},
   journal={Ann. of Math. (2)},
   volume={34},
   date={1933},
   number={3},
   pages={461--479},
   issn={0003-486X},
   review={\MR{1503118}},
   doi={10.2307/1968172},
}

\bib{Littlewood61}{article}{
   author={Littlewood, J.~E.},
   title={On the mean values of certain trigonometric polynomials},
   journal={J. London Math. Soc.},
   volume={36},
   date={1961},
   pages={307--334},
   issn={0024-6107},
   review={\MR{141934}},
   doi={10.1112/jlms/s1-36.1.307},
}

\bib{Mahler63}{article}{
   author={Mahler, Kurt},
   title={On two extremum properties of polynomials},
   journal={Illinois J. Math.},
   volume={7},
   date={1963},
   pages={681--701},
   issn={0019-2082},
   review={\MR{156950}},
}

\bib{Montgomery80}{article}{
   author={Montgomery, Hugh L.},
   title={An exponential polynomial formed with the Legendre symbol},
   journal={Acta Arith.},
   volume={37},
   date={1980},
   pages={375--380},
   issn={0065-1036},
   review={\MR{598890}},
   doi={10.4064/aa-37-1-375-380},
}

\bib{Newman60}{article}{
   author={Newman, D.~J.},
   title={Norms of polynomials},
   date={1960},
   journal={Amer. Math. Monthly},
   volume={67},
   pages={778\ndash 779},
   review={\MR{0125205}},
}

\bib{Newman65}{article}{
   author={Newman, D.~J.},
   title={An {$L\sp{1}$} extremal problem for polynomials},
   date={1965},
   journal={Proc. Amer. Math. Soc.},
   volume={16},
   pages={1287\ndash 1290},
   review={\MR{0185119}},
}

\bib{Pritsker08}{article}{
   author={Pritsker, Igor E.},
   title={Polynomial inequalities, Mahler's measure, and multipliers},
   conference={
      title={Number theory and polynomials},
   },
   book={
      series={London Math. Soc. Lecture Note Ser.},
      volume={352},
      publisher={Cambridge Univ. Press, Cambridge},
   },
   date={2008},
   pages={255--276},
   review={\MR{2428526}},
   doi={10.1017/CBO9780511721274.017},
}

\bib{Schmidt16}{article}{
   author={Schmidt, Kai-Uwe},
   title={Sequences with small correlation},
   journal={Des. Codes Cryptogr.},
   volume={78},
   date={2016},
   number={1},
   pages={237--267},
   issn={0925-1022},
   review={\MR{3440231}},
   doi={10.1007/s10623-015-0154-7},
}

\bib{Schmidt76}{book}{
   author={Schmidt, Wolfgang M.},
   title={Equations over finite fields: an elementary approach},
   series={Lecture Notes in Math.},
   volume={536},
   publisher={Springer, Berlin},
   date={1976},
   pages={ix+276},
   isbn={3-540-07855-X},
   review={\MR{0429733}},
   doi={doi.org/10.1007/BFb0080437},
}

\end{biblist}
\end{bibdiv}

\end{document}